\newtheorem{theorem}{Theorem}[section]
\newtheorem{Corollary}[theorem]{Corollary}
\newtheorem{lemma}[theorem]{Lemma}
\newtheorem{proposition}[theorem]{Proposition}
\newtheorem{problem}[theorem]{Problem}
\newtheorem{Fact}[theorem]{Fact}
\newtheorem{Construction}[theorem]{Construction}
\newtheorem{Conclusion}[theorem]{Conclusion}
\newtheorem{mainthm}[theorem]{Main Theorem}
\theoremstyle{definition}
\newtheorem{definition}[theorem]{Definition}
\newtheorem{remark}[theorem]{Remark}
\newtheorem*{xrem}{Remark}
\numberwithin{equation}{section}
\def\includeE#1{{\lhook\kern-3.5pt\joinrel\smash{
    \mathop{\longrightarrow}\limits^{#1}}}}
\def\efor/{Example~\ref{E4}}
\def\BL/{Baldwin--Lachlan}
\def\Bu/{Buechler}
\def\Hr/{Hrushovski}
\def\lm/{locally modular}
\def\wm/{weakly minimal}
\def\nm/{non--modular}
\def\ss/{superstable}
\def\ud/{unidimensional}
\def\sm/{strongly minimal}
\def\abar{\overline{a}}
\def\bbar{\overline{b}}
\def\cbar{\overline{c}}
\def\hbar{\overline{h}}
\def\xbar{\overline{x}}
\def\ybar{\overline{y}}
\def\dom{{\rm dom}}
\def\tp{{\rm tp}}
\def\tr/{trivial}
\def\nt/{non--trivial}
\def\st/{strong type}
\def\abar{\bar{a}}
\def\bbar{\bar{b}}
\def\cbar{\bar{c}}
\def\ybar{\bar{y}}
\def\phi{\varphi}
\def\B{{\mathcal B}}
\def\C{{\mathcal  C}}
\def\F{{\mathcal F}}
\def\FF{{\bf F}}
\def\M{{\mathcal M}}
\def\P{{\mathcal P}}
\def\Q{{\mathbb Q}}
\def\tp{{\rm tp}}
\def\dom{{\rm dom}}
\def\dcl{{\rm dcl}}
\def\Fa0{{\FF^a_{\aleph_0}}}
\def\<{\langle}
\def\>{\rangle}
\def\V{{\mathbb V}}
\def\Lsharp{{L^\sharp}}
\def\Msharp{{M^\sharp}}
\def\Lflat{{L^{\flat}}}
\def\Mflat{{M^{\flat}}}
\def\dc{{\rm dc}}
\def\B{{\mathbb B}}
\def\aa{{\bf a}}
\def\bb{{\bf b}}
\def\cc{{\bf c}}
\def\dd{{\bf d}}
\def\ee{{\bf e}}
\def\ss{{\bf s}}
\def\FM{{\rm Fm}}
\def\Fm{{\rm Fm}}
\def\qftp{{\rm qftp}}
\def\ptl{{\rm ptl}}
\def\flattp{{\flat\hbox{-}\tp}}
\def\E{{\mathcal E}}
\def\conj{{\rm conj}}
\def\cmap{{\rm cmap}}
\DeclareMathOperator{\Mod}{Mod}
\DeclareMathOperator{\HC}{HC}
\DeclareMathOperator{\Hdf}{Hdf}
\DeclareMathOperator{\Flat}{Flat}
\DeclareMathOperator{\Aut}{Aut}
\DeclareMathOperator{\Fix}{Fix}
\DeclareMathOperator{\Sym}{Sym}
\DeclareMathOperator{\CSS}{CSS}
\DeclareMathOperator{\css}{css}
\DeclareMathOperator{\can}{can}
\DeclareMathOperator{\bcan}{\can^\flat}
\DeclareMathOperator{\Kn}{Kn}
\DeclareMathOperator{\Succ}{Succ}
\def\Ax{{\rm Ax^\flat}(L)}
\def\Axprime{{\rm Ax^{\flat}}(L')}
\def\Axstar{{\rm Ax^{\flat}}(L^*)}
\def\Axsharp{{\rm Ax^{\sharp}}(L)}
\def\QF{{\rm QF}}
\def\emptyseq{\<\>}
\def\const{{\bf c_{\emptyseq}}}
\newcommand\myrestriction{\mathord\restriction}
\def\mr#1{\myrestriction_{#1}}
\DeclareMathOperator{\len}{length}
\newcommand{\obj}[3]{\mathcal{F}^{#1}\mathbb{S}^{#2}\mathbf{G}_{#3}}
\begin{document}


\baselineskip=17pt


\title[The existence of Borel complete expansions]{Characterizing the existence of a Borel complete expansion}

\author[M. C. Laskowski]{Michael C. Laskowski}
\address{Department  of Mathematics\\ University of Maryland\\
College Park, MD 20742  USA}
\email{laskow@umd.edu}

\author[D. S. Ulrich]{Douglas S. Ulrich}
\address{Department  of Mathematics\\ University of Maryland\\
College Park, MD 20742 USA}
\email{ds\_ulrich@hotmail.com}

\date{}


\subjclass[2020]{Primary 03C50; Secondary 03E15}

\keywords{Borel complexity, flat structures, potential cardinality}

\begin{abstract}  
	We develop general machinery to cast the class of potential canonical Scott sentences of an infinitary sentence $\phi$ as a class of structures in a related language.
	We  show that 
	$\phi$ has a Borel complete expansion if and only if $S_\infty$ divides $\Aut(M)$ for some countable model $M\models \phi$.
	From this, we prove that  for theories $T_h$  asserting that $\{E_n\}$ is a countable family of cross cutting equivalence relations with $h(n)$ classes, if
	$h(n)$ is uniformly bounded, then $T_h$ is not Borel complete, providing a converse to Theorem~2.1 of \cite{LU}.
	\end{abstract}

\maketitle


\section{Introduction}  
Back and forth systems are an invaluable tool in understanding the descriptive set theoretic complexity of a class of models.  Indeed, within the realm of countable structures,
back and forth equivalence is the same as isomorphism.  As every structure can be made countable in a forcing extension $\V[G]$ of the the set-theoretic 
universe $\V$, two arbitrary structures in the same vocabulary are back and forth equivalent if and only if they are potentially isomorphic, i.e., they can become isomorphic in some forcing extension.

Classically, given a countable structure $M$, there is a `preferred back and forth system' $\F_\infty:=\{(\abar,\bbar)\in M^{2n}: (M,\abar)\equiv_{\infty,\omega} (M,\bbar)\}$ and the data of
$(M,\F_\infty)$ is coded into a {\em Scott sentence} $\phi_M\in L_{\omega_1,\omega}$, whose countable models $N$ are precisely those isomorphic to $M$.  This assignment can be done canonically, giving a canonical object $\css(M)$.  Similarly, a structure $M$ of any size has an associated canonical Scott sentence $\css(M)$, which is a sentence of $L_{\infty,\omega}$ 
that describes the back-and-forth class of $M$.  Extending on this correspondence, given a sentence $\Phi\in L_{\omega_1,\omega}$, we can look at 
$\CSS(\Phi)_{{\rm sat}}:=\{\css(M):M\models \Phi\}$, which may be a proper class, but is bijective with the class $\Mod(\Phi)/\equiv_{\infty,\omega}$.  However, for applications, a possibly larger class $\CSS(\Phi)_\ptl$ is useful.   This class consists of sentences $\psi\in\V$, $\psi\in L_{\infty,\omega}$, such that in every forcing extension $\V[G]\supseteq\V$, if $(\psi\in L_{\omega_1,\omega})^{\V[G]}$, then $\psi=\css(N)$ for some $N\models \Phi$ in $\V[G]$.  In \cite{URL}  sentences in $\CSS(\Phi)_\ptl$  are called the {\em potential canonical Scott sentences} of models of $\Phi$.  

The number of potential canonical Scott sentences is relevant to the complexity of  $(\Mod_\omega(\Phi),\cong)$, the set of models of $\Phi$ with universe $\omega$, modulo isomorphism.
In \cite{URL} it is proved that if there is a Borel reduction $(\Mod_\omega(\Phi),\cong)\le_B (\Mod_\omega(\Psi),\cong)$, then $|\CSS(\Phi)_\ptl|\le |\CSS(\Psi)_\ptl|$ (where we interpret these `cardinalities' to be $\infty$ if they are proper classes).  
This is quite useful, although in the above formulation
 the elements of $\CSS(\Phi)_\ptl$ are simply sentences of $L_{\infty,\omega}$ and are hard to manipulate.

There are two parts to this paper.  In the first, we obtain a natural bijection between $\CSS(\Phi)_\ptl$ and a class of structures in an associated language $\Lflat$.
 In Section~2 we begin by introducing ``sharp" back and forth systems and structures
and  describe a canonical {\em flattening} operation that associates an $\Lflat$-structure 
$\B=(M,\F)^\flat$ to each sharp structure $(M,\F)$.   We show that the flattening operation transforms (infinitary) $L$-formulas $\phi$ into (infinitary) $\Lflat$-formulas $\phi^\flat$
in a natural way.  Using this, we distinguish the class of {\em Hausdorff} flat structures that correspond to flattenings of the `preferred'  sharp structures $(M,\F_{\infty})$.
In Section~\ref{Hdfstuff} we give a list $\Ax$ of infinitary $\Lflat$-sentences that
axiomatize these flattenings.  We show that for a countable flat structure $\B$, we can recover a sharp structure $(M,\F)$ whose flattening is isomorphic to $\B$.
We show that   the class of Hausdorff flat structures is absolute between forcing extensions 
and we define a canonical class of representatives $\Hdf^*$ of the $\Lflat$-isomorphism types of Hausdorff flat structures.  We  see that this class $\Hdf^*$ is naturally bijective with
$\CSS(L)_\ptl$, so we propose to take $\Hdf^*$ as an improved representation of potential canonical Scott sentences.  All of this relativizes to models of a given $\Phi\in L_{\omega_1,\omega}$, yielding
a class
$\Hdf^*(\Phi)$ that is naturally bijective with $\CSS(\Phi)_\ptl$.

The second part of the paper exploits the fact that the class $\Flat(\phi)$ of flat models of $\phi^\flat$ is axiomatized by the sentence $\phi^\flat\wedge\Ax\in (L^\flat)_{\omega_1,\omega}$.  Non-Hausdorff flat models correspond to flattenings of sharp structures $(M,\F)$ where $\F$ is something other than $\F_\infty$, which in turn codes the structure of an expansion $M^*$ of $M$.  In Section~\ref{precise} we make this correspondence precise and then in Section~\ref{BCexpansions}, by 
applying Morley's theorem to the $(L^\flat)_{\omega_1,\omega}$-sentence
$\phi^\flat\wedge\Ax$, we see that the existence of a Borel complete expansion of a model of $\phi$ is equivalent to Hjorth's notion of `$S_\infty$ divides $\Aut(M)$' for some countable
$M\models\phi$.\footnote{In \cite{Hjorth}, Hjorth proved that $\Aut(M)$ fails the Topological Vaught Conjecture on analytic sets if and only if $S_\infty$ divides $\Aut(M)$.}
This connection is indirect, in that we prove that each of these properties is equivalent to $\phi^\flat\wedge\Ax$ having arbitrarily large models, which in turn
is equivalent to this infinitary theory having Ehrenfeucht-Mostowski models.   The following result is proved as Theorem~\ref{HjorthThm} in Section 5.

\begin{theorem}  \label{HjorthThm1.1}  The following are equivalent for a sentence $\phi\in L_{\omega_1,\omega}$.
\begin{enumerate}
\item  $\phi$ has a Borel complete expansion (i.e., there is some countable $L'\supseteq L$ and $\psi\in (L')_{\omega_1,\omega}$ such that
$\psi\vdash \phi$ and $\psi$ is Borel complete);
\item  $\phi^\flat\wedge\Ax$ has arbitrarily large models;
\item  $\phi^\flat\wedge\Ax$ admits Ehrenfeucht-Mostowski models;
\item  $S_\infty $ divides $\Aut(M)$ for some countable $M\models \phi$.
\end{enumerate}
\end{theorem}

In Section 6 we discuss consequences of 
Theorem~\ref{HjorthThm1.1}.  We get a distinction between the Borel complexity of first order theories and sentences of $L_{\omega_1,\omega}$.
In particular, with Corollary~\ref{example}
we see that every first order theory with an infinite model has a Borel complete expansion, whereas there are are sentences of $L_{\omega_1,\omega}$ (even complete ones) that do not.  
One example of an infinitary sentence without a Borel complete expansion is the sentence $\phi_h$ that is used in the proof of Theorem~\ref{bounded}.   There it is proved that 
 the theory of cross-cutting equivalence relations $T_h$ in the language $\{E_n:n\in\omega\}$
with a uniform, finite bound on the number of $E_n$-classes is not Borel complete.  This is in contrast to Theorem~2.1 of \cite{LU}, where it is proved that the analogous theory
with unboundedly many classes is Borel complete.


\subsection{Preliminaries}

Following \cite{URL}, let $HC$ denote the set of hereditarily countable sets.  We begin by defining a class of formulas whose restrictions to HC are well behaved.

\begin{definition}  \label{HCdef}
Suppose $\phi(x)$ is any first order formula of set theory, possibly with a hidden parameter from $\HC$.  
\begin{itemize}
\item  $\phi(\HC)=\{a\in \HC:(\HC,\in)\models \phi(a)\}$ and if
 $\V[G]$ is a forcing extension of $\V$, then $\phi(\HC)^{\V[G]}=\{a\in \HC^{\V[G]}:\V[G]\models 
\hbox{`$a\in \phi(\HC)$'}\}$.
\item  $\phi(x)$ is
{\em $\HC$-forcing invariant}
if, for every twice-iterated forcing extension $\V[G][G']$,
\[\phi(\HC)^{\V[G][G']}\quad \cap\quad  \HC^{\V[G]}\quad = \quad \phi(\HC)^{\V[G]}\]
\end{itemize}

\end{definition}

The reader is cautioned that when computing $\phi(\HC)^{\V[G]}$, the quantifiers of $\phi$ range over $\HC^{\V[G]}$ as opposed to the whole of $\V[G]$.
The reason for the double iteration of forcing is to make the notion of a formula $\phi(x)$ being HC-invariant absolute between forcing extensions.
Visibly, the class of $\HC$-forcing invariant formulas is closed under boolean combinations, and it follows from L\'evy Absoluteness that every $\Sigma_1$ formula of set theory is $HC$-invariant,
see e.g., Lemma~2.2 of \cite{URL}.

The principal idea we wish to exploit is that every set is potentially in HC.  That is, for any set $A\in\V$, there is a forcing extension $\V[G]\supseteq\V$ such that $A\in HC^{\V[G]}$.
With this in mind, for any HC-invariant formula $\phi(x)$, let
$$\phi_\ptl=\{A\in\V:\V[G]\models \phi(A)\ \hbox{for some/every $\V[G]\supseteq \V$ with $A\in HC^{\V[G]}\}$}$$
Associated to every HC-invariant $\phi(x)$ is a  {\em strongly definable} family $X=(X^{\V[G]})$, indexed by collection of forcing extensions $\V[G]\supseteq\V$, where each $X^{\V[G]}=\phi(HC)^{\V[G]}$.
We say $X$ is {\em strongly definable} if it is strongly definable via some HC-invariant formula.
For $X$ strongly definable, we write $X_{\ptl}$ for $\phi_\ptl$, where $\phi(x)$ is any HC-invariant formula defining $X$.
We call $X$ {\em short} if $X_\ptl$ is a set, as opposed to a proper class.
For a short $X$, we let $||X||:=|X_\ptl|$,   and we write $||X||=\infty$ when $X$ is not short.

If a strongly definable $X$ is defined by a formula $\phi(x)$ that is absolute between any forcing extensions, then, using the fact that
every set $A\in\V$ is in $HC^{\V[G]}$ for some forcing extension $\V[G]\supseteq\V$,  $X_\ptl$ is all sets $A\in\V$ such that $\phi(A)$ holds, i.e.,
$A\in X^{\V[G]}$ for some forcing extension $\V[G]\supseteq\V$.
We abuse notation slightly and refer to this class as $X$. 

As examples, for any countable language $L$ and any $\Phi\in L_{\omega_1,\omega}$,  both $\Mod_{HC}(L)=\{M\in HC:  M$ is an $L$-structure$\}$ 
and $\Mod_{HC}(\Phi)=\{M\in\Mod_{HC}(L):M\models\Phi\}$ are 
strongly definable.  
Here, as being an $L$-structure and satisfaction are absolute,
we write $\Mod(L)$ and $\Mod(\Phi)$ in place of the more cumbersome $(\Mod_{HC}(L))_\ptl$ or $(\Mod_{HC}(\Phi))_\ptl$.


Working with strongly definable sets, we say a notion holds {\em persistently} if it holds in every forcing extension $\V[G]\supseteq\V$.  As examples,
we say $f:X\rightarrow Y$ persistently if, for every $\V[G]\supseteq\V$, $f^{\V[G]}$ is a function with domain $X^{\V[G]}$, taking elements in $Y^{\V[G]}$.
{\em Persistently injective} and {\em  persistently bijective} functions are defined similarly.  
We record the following fact, which appears as Lemma~2.14 of \cite{URL}.

\begin{lemma}  \label{ptl}  Suppose $f,A,B$ are strongly definable with $A,B\subseteq HC$
and $f:A\rightarrow B$ is persistently a function.  Then $f_{\ptl}$ is a class functional from $A_{\ptl}$ to $B_{\ptl}$.

If, in addition, $f:A\rightarrow B$ is persistently injective (resp.\ bijective) then $f_\ptl$ is injective (resp.\ bijective).
%
\end{lemma}

We also include one new notion that was not discussed in \cite{URL}.  For a persistent function $f:A\rightarrow B$ as in Lemma~\ref{ptl}, we say {\em $f$ has a persistent cross section} if there is a strongly definable $g:B\rightarrow A$ such that, persistently, $(\forall b\in B) f\circ g(b)=b$. 

\begin{lemma}  \label{cross}  Suppose $f,A,B$ are strongly definable with $A,B\subseteq HC$
and $f:A\rightarrow B$ is persistently a function.  If $g:B\rightarrow A$ is a persistent cross section of $f$, then $f_\ptl:A_\ptl\rightarrow B_\ptl$ is surjective.
\end{lemma}

\begin{proof}  Given $b\in B_\ptl$, put $a:=g_\ptl(b)$.  Then $f_\ptl(a)=b$, so $f_\ptl$ is surjective.
\hfill\qedsymbol\end{proof}


Much of Descriptive Set Theory, or at least the study of Borel reducibility, revolves around analyzing the complexity of equivalence relations, and hence quotients.
Call a pair $(X,E)$ a {\em strongly definable quotient} if both $X$ and $E$ are strongly definable and $E$ is persistently an equivalence relation on $X$.
An {\em HC-reduction} of $(X,E)$ into another strongly definable quotient $(Y,F)$ is a strongly definable 
$f\subseteq X\times Y$ such that, persistently,
\begin{itemize}
		\item The $E$-saturation of $\dom(f)$ is $X$.  That is, for every $x\in X$, there is an $x'\in X$ and $y'\in Y$ where $xEx'$ holds and $(x',y')\in f$.
		\item $f$ induces a well-defined injection on equivalence classes.  That is, if $(x,y)$ and $(x',y')$ are in $f$, then $xEx'$ holds if and only if $yFy'$ does.
	\end{itemize}

We say  {\em $(X,E)$ is HC-reducible to $(Y,F)$,} written
$$(X,E)\le_{HC} (Y,F)$$
if an HC-reduction of $(X,E)$ into $(Y,F)$ exists, and we say that $(X,E)$ and $(Y,F)$ are {\em HC-bireducible} if
if $(X,E)$ is HC-reducible to $(Y,F)$ and $(Y,F)$ is HC-reducible to $(X,E)$.
As examples, if $X,E,Y,F$ are  Borel subsets of Polish spaces then any Borel reduction is an $HC$-reduction, as is any absolutely $\Delta^1_2$-reduction, see e.g., \cite{URL}. 

 A {\em representation} of a strongly definable quotient  $(X,E)$ is
a pair $f,Z$ of strongly definable families such that  $f:X\rightarrow Z$ is persistently
surjective and persistently,
$$(\forall a,b\in X) [E(a,b)\leftrightarrow f(a) = f(b)]$$
It is easily checked that if $f,Z$ and $f',Z'$ are two representations of the same $(X,E)$, then $||Z||=||Z'||$, so provided a representation exists, 
we define $||(X,E)||$ to be this common value, which may be $\infty$.  
We call $(X,E)$ {\em short} if $||(X,E)||<\infty$. 
The following fact is essentially Proposition~2.18 of \cite{URL}.

\begin{Fact}  \label{bound}  Suppose $(X,E)$ and $(Y,F)$ are both represented, strongly definable quotients.
If  $(X,E)\le_{HC}(Y,F)$, then $||(X,E)||\le ||(Y,F)||$.
\end{Fact}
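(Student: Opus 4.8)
The plan is to chase the definitions and compose reductions at the level of representations. Fix representations $p\colon X\to Z$ and $q\colon Y\to W$, so $||(X,E)|| = ||Z||$ and $||(Y,F)|| = ||W||$ by the remark preceding the statement; it therefore suffices to exhibit a persistently injective persistent function $h\colon Z\to W$ (working with HC-witnesses) and then invoke Lemma~\ref{ptl} to conclude $||Z|| = |Z_\ptl| \le |W_\ptl| = ||W||$. Let $f\subseteq X\times Y$ be the strongly definable set witnessing $(X,E)\le_{HC}(Y,F)$, with the two bulleted properties holding persistently.

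First I would define $h$ on HC-elements as follows. Given $z\in Z$, the map $p$ is persistently surjective, so (persistently) there is $x\in X$ with $p(x)=z$; by the $E$-saturation-of-$\dom(f)$ clause there is $x'\in X$ with $xEx'$ and some $y'\in Y$ with $(x',y')\in f$; set $h(z):=q(y')$. The content to check is that this is well-defined and does not depend on the choices of $x$, $x'$, $y'$: if $x_1,x_1',y_1'$ and $x_2,x_2',y_2'$ are two such witness-chains for the same $z$, then $p(x_1)=p(x_2)=z$ gives $x_1 E x_2$ (second representation clause), hence $x_1' E x_2'$, hence $y_1' F y_2'$ by the injectivity-on-classes clause for $f$, hence $q(y_1') = q(y_2')$ (representation clause for $q$). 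So $h$ is, persistently, a genuine function $Z\to W$; that it is strongly definable follows since all the objects involved are, and the defining formula is the evident one (which is HC-forcing invariant because it is built by bounded quantification over HC from the given HC-invariant data — here I would point out that "for some/for every witness" agree precisely because of the well-definedness just proved).

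Next I would check $h$ is persistently injective. Suppose $h(z_1) = h(z_2)$; pick witness-chains $x_i, x_i', y_i'$ as above, so $q(y_1') = h(z_1) = h(z_2) = q(y_2')$, whence $y_1' F y_2'$ by the representation property of $q$. The injectivity-on-classes clause for $f$ then gives $x_1' E x_2'$, so $x_1 E x_2$, so $p(x_1) = p(x_2)$, i.e. $z_1 = z_2$. Finally, apply the second half of Lemma~\ref{ptl}: since $h\colon Z\to W$ is persistently injective between strongly definable subsets of $HC$, $h_\ptl\colon Z_\ptl\to W_\ptl$ is injective, so $||(X,E)|| = |Z_\ptl| \le |W_\ptl| = ||(Y,F)||$ (reading both sides as $\infty$ when the relevant class is proper), as desired.

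The only delicate point — the "main obstacle" — is the bookkeeping that $h$ as defined really is strongly definable, i.e. that its defining formula is HC-forcing invariant: one must confirm that the "there exists a witness-chain" formula and the "for all witness-chains" formula define the same class in every (twice-iterated) forcing extension, which is exactly the well-definedness argument of the second paragraph run inside $\V[G][G']$. Since $f$, $p$, $q$, $X$, $Y$, $Z$, $W$ are all strongly definable and the witness-chain conditions are bounded quantifications over $HC$, this goes through, but it is worth stating explicitly rather than leaving implicit. Everything else is a routine diagram chase through the definitions of representation and $\le_{HC}$, and indeed this is why the statement is only claimed to be "essentially" Proposition~2.18 of \cite{URL}.
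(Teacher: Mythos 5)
Your argument is correct: the paper does not prove this Fact (it is cited as essentially Proposition~2.18 of \cite{URL}), and your definition-chase --- building the persistently injective $h\colon Z\to W$ from a reduction witness and invoking Lemma~\ref{ptl} --- is exactly the intended argument, with the well-definedness and injectivity checks both done properly. The one point worth making explicit is that representations land in $HC$ (so that Lemma~\ref{ptl} applies to $h\colon Z\to W$), but this is immediate from the definition of a representation as a strongly definable family.
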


Continuing with our example above, for a countable language $L$, let $\Mod_\omega(L)$ be the Polish space of $L$-structures with universe $\omega$.
By a theorem of Lopez-Escobar, \cite{LE}, the Borel subsets of $\Mod_\omega(L)$, invariant under the action of $S_\infty$, are of the form $\Mod_\omega(\Phi)$
for some $\Phi\in L_{\omega_1,\omega}$.
In \cite{FS}, Friedman and Stanley introduced the concept of Borel reducibility between isomorphism classes of this form.  
For infinitary $\Phi,\Psi$, possibly in different languages, $(\Mod_\omega(\Phi),\cong)$ is {\em Borel reducible} to $(\Mod_\omega(\Psi),\cong)$ if there is a Borel
$f:\Mod_\omega(\Phi)\rightarrow \Mod_\omega(\Psi)$ satisfying $M\cong N$ iff $f(M)\cong f(N)$ for all $M,N\in\Mod_\omega(\Phi)$.
They showed that among invariant Borel subsets, there is a maximal class, dubbed  {\em Borel complete}, with respect to Borel reducibility.  They showed that  e.g., the classes
of graphs, linear orders, and trees are Borel complete.  
 
Placing this into our context,
 there is a representation
$$\css:(\Mod_{HC}(L),\cong)\rightarrow \CSS(L)$$
where, for each $L$-structure $M\in HC$, $\css(M)$ is the {\em canonical Scott sentence of $M$}, which is a sentence of $L_{\omega_1,\omega}$ (see Definition~3.1 of \cite{URL} for a formal
definition of this map).  The salient property of $\css(M)$ is described by

\begin{Fact}  \label{later}  For any countable language $L$ and any $M,N\in\Mod_{HC}(L)$, $N\models \css(M)$ if and only if $N\cong M$.
\end{Fact}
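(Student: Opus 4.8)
The plan is to recover Scott's isomorphism theorem from the explicit shape of $css(M)$. As recalled in the introduction and made precise in Definition~3.1 of \cite{URL}, for a structure $M\in\Mod_{HC}(L)$ the sentence $css(M)\in L_{\omega_1,\omega}$ is assembled from a canonically chosen family of \emph{Scott formulas} $\{\theta^M_{\abar}:\abar\in M^{<\omega}\}$, where $\theta^M_{\abar}(\xbar)$ isolates the $\equiv_{\infty,\omega}$-class of $\abar$ in $M$, the defining recursion being truncated at the Scott rank of $M$, which is a countable ordinal since $M$ is countable. Concretely, $css(M)$ is the conjunction of $\theta^M_{\langle\rangle}$ with, for each $\abar\in M^{<\omega}$, the sentence $\forall\xbar[\,\theta^M_{\abar}(\xbar)\to(\bigwedge_{a}\exists y\,\theta^M_{\abar a}(\xbar y)\wedge\forall y\,\bigvee_{a}\theta^M_{\abar a}(\xbar y))\,]$, the conjunctions and disjunctions ranging over a set of representatives of the relevant classes.

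The backward implication is immediate: $M\models css(M)$ holds by construction, since $M$ itself witnesses each conjunct (every $\abar$ satisfies $\theta^M_{\abar}$, and $\theta^M_{\abar a}$ is among the disjuncts available for extensions of $\abar$), and $L_{\omega_1,\omega}$-truth is invariant under isomorphism, so $N\cong M$ gives $N\models css(M)$.

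For the forward implication, assume $N\models css(M)$; both $M$ and $N$ are countable, being elements of $HC$. Set $\F=\{(\abar,\bbar)\in M^{<\omega}\times N^{<\omega}:|\abar|=|\bbar|,\ N\models\theta^M_{\abar}(\bbar)\}$. The first conjunct of $css(M)$ gives $(\langle\rangle,\langle\rangle)\in\F$. Given $(\abar,\bbar)\in\F$ and $a\in M$: since $M\models\theta^M_{\abar a}(\abar a)$ and the disjuncts in the forth-clause for $\abar$ partition the realizations of $\theta^M_{\abar}$ into $\equiv_{\infty,\omega}$-classes, $\theta^M_{\abar a}$ is one of those disjuncts, so the forth-clause read in $N$ yields $b\in N$ with $N\models\theta^M_{\abar a}(\bbar b)$, i.e.\ $(\abar a,\bbar b)\in\F$; symmetrically the back-clause handles a given $b\in N$. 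Thus $\F$ is a back-and-forth system between $M$ and $N$, and enumerating $M$ and $N$ and alternately extending along $\F$ produces an isomorphism $M\cong N$.

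The substantive point — and the reason the precise development is deferred to \cite{URL} — is the \emph{canonicity} of the construction: that $\abar\mapsto\theta^M_{\abar}$, and hence $M\mapsto css(M)$, can be arranged as a $\Delta_1$ recursion that is absolute and HC-forcing invariant and independent of bookkeeping choices, and that the rank recursion stabilizes, so that the outer conjunction is indexed by a set and each $\theta^M_{\abar}$ genuinely lies in $L_{\omega_1,\omega}$. Once that framework is in place, the content of Fact~\ref{later} is exactly the classical back-and-forth argument sketched above.
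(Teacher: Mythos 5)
The paper does not prove this statement: it is recorded as a \textbf{Fact} and deferred to the classical literature (it is Scott's isomorphism theorem, with the canonical construction of $css(M)$ given in Definition~3.1 of \cite{URL}). So there is no in-paper argument to compare against; what you have written is the standard proof, and it is essentially correct. The backward direction is exactly right, and the forward direction correctly extracts a back-and-forth system $\F=\{(\abar,\bbar): N\models\theta^M_{\abar}(\bbar)\}$ from the conjuncts of $css(M)$, using that the formulas are taken at the stabilized (Scott) rank so that $\theta^M_{\abar a}$ agrees with the formula of whichever class representative appears in the forth-clause.

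One small point you should make explicit: to conclude that alternately extending along $\F$ yields an $L$-isomorphism (and not merely a bijection), you need that each pair $(\abar,\bbar)\in\F$ is a partial isomorphism, i.e.\ that $N\models\theta^M_{\abar}(\bbar)$ forces $\bbar$ to realize the same atomic type as $\abar$. This is immediate from the base case of the Scott-formula recursion ($\theta^M_{\abar}$ entails the quantifier-free type of $\abar$), and it is precisely clause (1) of the paper's Definition~2.1 of a back-and-forth system, but as written your argument only verifies the extension clauses. With that sentence added, the proof is complete and matches the intended (classical) justification of the Fact.
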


That the set $\CSS(L)$ is strongly definable and that $\css$ is a persistent HC-invariant surjection  follows from Karp's Completeness Theorem for sentences of
$L_{\omega_1,\omega}$ (see e.g., Theorem~3 of \cite{Keisler} and Lemma~3.3 of \cite{URL}).  

%

For any $\Phi\in L_{\omega_1,\omega}$, the restriction of the $\css$ representation gives
a representation $\CSS(\Phi)$ of $(\Mod_{HC}(\Phi),\cong)$.  Thus, 
$\CSS(\Phi)_\ptl$, the class of {\em potential canonical Scott sentences} is the class of all $\psi\in\V$, where $\psi\in L_{\infty,\omega}$, but for some/every 
 $\V[G]\supseteq \V$ where $(\psi\in L_{\omega_1,\omega})^{\V[G]}$, we have $(\psi\in \CSS(\Phi))^{\V[G]}$.
 
 Note that $\CSS(\Phi)$ is also a representation of $(\Mod_\omega(\Phi),\cong)$, the isomorphism classes of models of $\Phi$ with universe $\omega$.
 We write $||\Phi||$ in place of $||(\Mod_{HC}(\Phi),\cong)||=|\CSS(\Phi)_\ptl|$ (which may be $\infty$).   If $(\Mod_\omega(\Phi),\cong)$ is Borel complete,
 then $\Phi$ is not short, i.e., $||\Phi||=\infty$.  This follows from the fact that there is a proper class of back-and-forth inequivalent graphs.

Note, however, that for some $\Phi$, the class $\CSS(\Phi)_\ptl$  can be strictly larger than $\CSS(\Phi)_{{\rm sat}}$, the latter being the class of all $\css(M)$ for $M\in \Mod(\Phi)$.
When these two classes are not equal, we say that {\em $\Phi$ is not grounded}.  
One of the central goals of Section~\ref{Hdfstuff} is to get better control of the class of potential canonical Scott sentences of $\Phi$ when $\Phi$ is not grounded.

\section{Back and forth systems, sharp expansions and their flattenings}

Fix, for the whole of this section, a countable language $L$.  We begin with the well known definition of a back and forth system, but then we add additional adjectives.

\begin{definition}   \label{bf}
Given an arbitrary $L$-structure $M$, a (finitary) {\em back-and-forth system on $M$} is a non-empty set $\F$ of pairs $(\abar,\bbar)$ of finite tuples from $M$ with $\lg(\abar)=\lg(\bbar)$ satisfying:
\begin{enumerate}
\item  For $(\abar,\bbar)\in\F$, the map $\abar\mapsto\bbar$ is q.f.-$L$-elementary (i.e., for any atomic $L$-formula $\alpha(\xbar)$, $M\models\alpha(\abar)\leftrightarrow\alpha(\bbar)$);
and
\item  For all $(\abar,\bbar)\in\F$ and singleton $c\in M$, there is $d\in M$ such that $(\abar c,\bbar d)\in \F$; and dually for every $d'\in M$ there is $c'\in M$ such that $(\abar c',\bbar d')\in\F$.
\end{enumerate}

\end{definition}

For an $L$-structure $M$ of any size, we define the {\em subsequence maps} as follows.
For each $n\in\omega$, all $k\le n$ and all injective $f:k\rightarrow n$, $f$ induces a mapping from $M^n\rightarrow M^k$ defined as:
for every $\abar=(a_0,\dots,a_{n-1})\in M^n$, $\abar\mr{f}$ is the subsequence  $(a_{f(0)},\dots,a_{f(k-1)})$ of $\abar$.
When $k=0$, we identify $M^0$ with the empty sequence $\emptyseq$.  Thus, for every $n\in\omega$, taking $f:0\rightarrow n$ gives
$\abar\mr{f}=\emptyseq$ for every $\abar\in M^n$.
As a special case, when $k=n$, $f\in \Sym(\{0,\dots,n-1\})$ is a permutation and $\abar\mr{f}$ is the associated permutation of $\abar$.

\begin{definition}  \label{sharp} Let $\F$ be any back and forth system on an $L$-structure $M$.
\begin{itemize}
\item We say $\F$ is {\em downward closed} if, for all $k\le n$ and all injective $f:k\rightarrow n$, whenever $(\abar,\bbar)\in\F\cap M^{2n}$, then
$(\abar\mr{f},\bbar\mr{f})\in\F$ as well.  

\item  $\F$ is a {\em sharp} back and forth system on $M$
if $\F$ is both downward closed and, for every $k\in\omega$,  $\F\cap M^{2k}$ is an equivalence relation $E_k$ on $M^k$ for every $k$. [When $k=0$, $E_0$ is the trivial equivalence relation
on $M^0=\{\emptyseq\}$.]
\end{itemize}

\end{definition}  

It is readily verified that  if $\F$ is a back-and-forth system on $M$,  then so is its {\em downward closure} $dc(\F)=\{(\abar\mr{f},\bbar\mr{f}):(\abar,\bbar)\in\F$, $f:k\rightarrow n$ injective$\}$.
As well, if for every $k\in\omega$ we let $E_k$ be the symmetric and transitive closure of $(\dc(\F)\cap M^{2k})\cup \{(\abar,\abar):\abar\in M^k\}$, then
the union of the set of equivalence relations $\{E_k:k\in\omega\}$ is the smallest sharp back-and-forth system  containing $\F$.  
For the remainder of this paper, we will only consider sharp back-and-forth systems.
%

We formalize the operations described by expanding the language.  

\begin{definition}  \label{sharpdef}    For a given countable language $L$, let  $\Lsharp=L\cup\{E_k:k\in\omega\}$, where each $E_k$ is a new $2k$-ary relation symbol.
A {\em sharp structure $(M,\F)$} is an $\Lsharp$ expansion of an $L$-structure $M$ by interpreting each $E_k$ as $\F\cap M^{2k}$ for some distinguished
sharp back-and-forth system $\F$.  The class of sharp structures is (first order) elementary, so fix an axiomatization
$\Axsharp$ of this class.

\end{definition}


Regardless of the size of $M$, there is a unique largest sharp back and forth system on $M$, which we denote by $\F_\infty$
and we let $\Msharp$ denote the canonical sharp expansion $(M,\F_\infty)$.
To describe $\F_\infty$ we recall Karp's notion of back-and-forth equivalence 
(equivalently, $L_{\infty,\omega}$-elementary equivalence) 
of structures.

\begin{definition}  Fix an $L$-structure $M$.  For tuples $\abar,\bbar$ of the same length, we say {\em $(M,\abar)$ and $(M,\bbar)$ are back-and-forth equivalent} if
there is some back-and-system $\F$ containing $(\abar,\bbar)$.  We denote this as $(M,\abar)\equiv_{\infty,\omega}(M,\bbar)$.   This introduces an equivalence relation
on each $n$ and we say $\tp_\infty(\abar)=\tp_\infty(\bbar)$ if $(M,\abar)$ and $(M,\bbar)$ are back-and-forth equivalent.

\end{definition}

Back and forth equivalence is the same as $L_{\infty,\omega}$-elementary equivalence, so $\tp_\infty(\abar)$ is a set of $L_{\infty,\omega}$-formulas.  Moreover,
$\tp_\infty(\abar)$ is isolated by a single $L_{\infty,\omega}$-formula, namely the $L_{\infty,\omega}$-Scott sentence of $(M,\abar)$.
It is straightforward to check that $$\F_\infty:=\{(\abar,\bbar):\tp_\infty(\abar)=\tp_\infty(\bbar)\}$$ is a sharp back-and-forth system, and in fact, $\F_\infty$ contains every
sharp back-and-forth system on $M$.  

Another source of examples is found via expansions.  Suppose $L^*\supseteq L$ and $M^*$ is an expansion of $M$.  
Define $\F^*:=\{(\abar,\bbar):(M^*,\abar)\equiv_{\infty,\omega} (M^*,\bbar)\}$.  
Then $(M,\F^*)$ is also a sharp expansion of $M$ and $\F^*\subseteq\F_\infty$.
In fact, every sharp expansion $(M,\F)$ corresponds to an expansion $M^*$ of $M$, formed by adding a new $k$-ary predicate for every $E_k$-class in $\F$.
Then $\F$ is the largest sharp back and forth system on $M^*$.

\subsection{Flattenings of sharp structures}
Here we introduce the central notion of this paper.

\medskip

Given a sharp structure $(M,\F)$ in the language $\Lsharp=L\cup\{E_n:n\in\omega\}$, it is natural to form a `restricted eq-expansion' by adding new sorts for each of the $2n$-ary equivalence relations
$E_n$.  That is, for each $n$, add a new unary predicate symbol $U_n(z)$ and an $n$-ary function symbol $\pi_n$ and let $(M,\F)^{eq}$ be formed by interpreting each 
predicate  $U_n$ as being a name for the $E_n$-classes, and $\pi_n:M^n\rightarrow U_n$ is the canonical projection, i.e., $\pi_n(\abar)=\abar/E_n$.

Note that the subsequence maps $\abar\mapsto\abar\mr{f}$ are definable in this language, so we also get induced (unary!) projection maps $P^f_{k,n}:U_n\rightarrow U_k$
indexed by injections $f:k\rightarrow n$,
and these commute with the projection maps $\pi_n$, i.e., if $f:k\le n$ is any injection,
then for any $\abar\in M^n$, $P^f_{k,n}\pi_n(\abar)=\pi_k(\abar\mr{f})$.

We now define the {\em flattening of $(M,\F)$}, which naively can be described as the result of passing to the $eq$-structure described above, and then `forgetting the home sort.'
In what follows, $L$-formulas  $\phi(x_0,\dots,x_{n-1})$ (either finitary or infinitary) should be thought of as codes for subsets of $M^n$.  Part of the data of $\phi(x_0,\dots,x_{n-1})$ includes
the $n$, hence for the following definitions, $L$-formulas should not be thought of as being closed under adjunction of dummy variables.  By contrast, in the $\Lflat$ language defined below, each $U_n$ is 
unary, with an element of $U_n^\B$ of a flat structure coding an equivalence class of $n$-tuples from some $L$-structure.  
More formally, 


\begin{definition}   For any language $L$, let $\QF_L=\bigcup\{\QF_n:n\in\omega\}$, where $\QF_n$ is the set of all (finitary) quantifier-free formulas $\alpha_n(x_0,\dots,x_{n-1})$,
such that every free variable is among $\{x_0,\dots,x_{n-1}\}$.  [Every quantifier-free formula $\alpha$ appears in infinitely many $\QF_n$.]
Let 
$$\Lflat\!:=\!\{U_n(z):n\in\omega\}\cup\{\alpha_n^\flat(z):\alpha_n\in\QF_n,n\in\omega\} \cup$$
$$\qquad\cup\{P^f_{k,n}:f:k\rightarrow n\ \hbox{an injection$\}$}\cup\{\const\}$$ where each $U_n$ and $\alpha_n^\flat$ are unary predicates, $P^f_{k,n}:U_n\rightarrow U_k$
are unary function symbols, and $\const$ is a constant symbol.

Given any sharp structure $(M,\F)$, its {\em flattening} is the $\Lflat$-structure $\B=(M,\F)^\flat$ with universe $\bigcup_{n\in\omega} M^n/E_n$, where each 
$U_n^{\B}=M^n/E_n$, $\alpha_n^\flat(\B)=\{\abar/E_n\in U_n:M\models \alpha_n(\abar)\}$, each $P^f_{k,n}:U_n^{\B}\rightarrow U_k^{\B}$ given by $P^f_{k,n}(\abar/E_n)=\abar\mr{f}/E_k$,
and $\const$ is the unique element of $M^0/E_0$.

\end{definition}

In the above, the number of free variables of $\alpha_n$ is critical to its flattening.   In particular, every $\phi_n\in\QF_n$ has
 $\alpha_n^\flat\vdash U_n$, whereas flattenings of elements of $\QF_{n+1}$ entail $U_{n+1}$.  $L$-sentences have flattenings in $U_0$.
 Although being part of the signature, we abuse notation slightly and take the domain of $P^f_{k,n}$ inside any $\Lflat$-structure $\B$ to be $U_n^{\B}$.
In any  flattening, $\alpha_n^\flat(\B)$ and $P^f_{k,n}$ are well defined because $\F$ is a sharp back-and-forth system on $M$.  In particular, if $E_n(\abar,\bbar)$ holds,
then $\qftp_M(\abar)=\qftp_M(\bbar)$.  Note also that  $\const$ is the unique element of $U_0^\B$ and for every quantifier-free $L$-sentence $\sigma$,
$\B\models\sigma_0^\flat(\const)$ if and only if $M\models \sigma$.

The correspondence $\alpha\mapsto\alpha^\flat$ extends naturally to arbitrary $L$-formulas, and even to $L_{\infty,\omega}$-formulas.

\begin{definition} \label{flatformulas}
  For every formula $\phi_n(x_0,\dots,x_{n-1})$ of $L_{\infty,\omega}$, we recursively define an $(\Lflat)_{\infty,\omega}$-formula $\phi^\flat(z)$ as follows:
\begin{itemize}
\item  For $\alpha_n(x_0,\dots,x_{n-1})\in\QF_n$, let $\alpha^\flat_n$ be the distinguished $\Lflat$ predicate symbol;
\item  For formulas  $\{\phi_i(x_0,\dots,x_{n-1}):i\in I\}$, $(\bigwedge_{i\in I}\phi_i)^\flat:=\bigwedge_{i\in I}\phi_i^\flat$;
\item  $(\neg\phi)^\flat:=\neg(\phi^\flat)$; and
\item  $(\exists y\theta(x_0,\dots,x_{n-1},y))^\flat:=\exists w(U_{n+1}(w)\wedge\theta^\flat(w)\wedge P^{id}_{n,n+1}(w)=z)$.
\end{itemize}  
Let $\FM^\flat:=\{\phi^\flat(z):\phi\in L_{\infty,\omega}\}$, which is a subset of $(\Lflat)_{\infty,\omega}$.\newline
For an $\Lflat$-structure $\B$, $n\in\omega$,  and $\aa\in U_n^{\B}$, let
$$\flattp_{\B}(\aa):=\{U_n(z)\}\cup\{\phi^\flat(z)\in\Fm^\flat: \B\models\phi^\flat(\aa)\}$$

\end{definition}

Note that every   $\Lflat$-formula (finitary or infinitary) in $\FM^\flat$ has at most one free variable.  If $\phi^\flat(z)\vdash U_n(z)$ for some $n\ge 1$, it has exactly one free variable.
For $n=0$, since $U_0^\B=\{\const\}$, for any (infinitary) $L$-sentence $\sigma$ we identify the (infinitary) $L^\flat$-sentences 
$\sigma_0^\flat(\const)$ and  $\exists z \sigma_0^\flat(z)$.
For brevity, we write $\sigma^\flat$ in place of $\sigma^\flat(\const)$ for the flattening of  $\sigma$.  
The following Lemma is straightforward.

\begin{lemma}  \label{27} For all $n$ and all $\phi(x_0,\dots,x_{n-1})\in L_{\infty,\omega}$, for all $L$-structures $M$ and for all sharp back-and-forth systems $\F=\{E_n:n\in\omega\}$ on $M$ we have (letting
$\B$ denote the flattening $(M,\F)^\flat$) 
for all $\abar\in M^n$: 
$$M\models \phi(\abar) \quad \hbox{if and only if} \quad \B\models\phi^\flat(\abar/E_n)$$
\end{lemma}

\begin{proof} 
We prove this by induction on the complexity of $\phi$.  For quantifier-free $L$-formulas, this correspondence is built into the definition of a flattening, and it is routine to see this correspondence is preserved under (infinitary) boolean combinations.  To see that it is preserved under quantification, assume the Lemma holds for $\theta(\xbar,y)$ with $\lg(\xbar)=n$.    
Fix any $(M,\F)$
and choose any  $\abar\in M^n$.  Let $\B$ denote the flattening and let $\aa=\abar/E_n$.
On one hand, if $M\models\exists y\theta(\abar,y)$, choose $b\in M$ such that $M\models\theta(\abar,b)$ and let $\bb=\abar b/E_{n+1}\in U_{n+1}$.
As our inductive hypothesis gives $\B\models\theta^\flat(\bb)$, we conclude $\B\models\phi^\flat(\aa)$.
Conversely, choose $\bb\in U_{n+1}$ witnessing $\B\models\phi^\flat(\aa)$, i.e., $\B\models\theta^\flat(\bb)\wedge P^{id}_{n,n+1}(\bb)=\aa$.  
As $\B\models\theta^\flat(\bb)$, by induction there is some $\bbar\in M^n$ and $b^*\in M$
such that $\bbar b^*/E_{n+1}=\bb$ and $M\models\theta(\bbar, b^*)$.  
It follows from the interpretation of $P^{id}_{n,n+1}$ in $\B$ that $\bbar/E_n=\aa$, which is also equal to
$\abar/E_n$.  Thus, $(\abar,\bbar)\in\F$.  As $\F$ is a back-and-forth system, there is $a^*\in M$ such that $(\abar a^*,\bbar b^*)\in\F$, so $\abar a^*/E_{n+1}=\bbar b^*/E_{n+1}=\bb$.  By the inductive hypothesis applied to $\theta(\xbar,y)$, we conclude $M\models\theta(\abar,a^*)$, i.e., $M\models\phi(\abar)$, as required.
\hfill\qedsymbol\end{proof}


\begin{definition}   An $\Lflat$-structure $\B$ is {\em Hausdorff\/} if $\FM^\flat$ separates points, i.e., 
distinct  elements from $\B$ have distinct $\flat$-types.  That is,
 $\flattp_{\B}(\aa)=\flattp_{\B}(\bb)$ implies $\aa=\bb$.

\end{definition}


\begin{lemma} \label{2.9} For any $L$-structure $M$, the flattening of  $\Mflat$, the canonical expansion $\Msharp=(M,\F_\infty)$, is Hausdorff.  Conversely, if $(M,\F)$ is any sharp expansion of $M$
whose flattening is Hausdorff, then $\F=\F_\infty$.
\end{lemma}

\begin{proof} 
Let $\B$ be the flattening of  the canonical $\Msharp$.  To see that $\B$ is Hausdorff,  choose any $\aa\neq\bb$ from $\B$.  If $\aa,\bb$ are in different $U_n$'s they are clearly separated, so assume $\B\models U_n(\aa)\wedge U_n(\bb)$.  By the definition of the flattening, choose $\abar,\bbar\in M^n$ such that $\abar/E_n=\aa$ and $\bbar/E_n=\bb$.  As $\aa\neq\bb$, $(\abar,\bbar)\not\in\F_\infty$,
so $\tp_\infty(\abar)\neq\tp_\infty(\bbar)$.  Choose any infinitary $\phi(\xbar)\in\tp_\infty(\abar)\setminus\tp_\infty(\bbar)$.  Then $\B\models\phi^\flat(\aa)\wedge\neg\phi^\flat(\bb)$,
so $\B$ is Hausdorff.  

For the converse, recall that for any back-and-forth system $\F$ on $M$, if $(\abar,\bbar)\in\F$, then $\tp_{\infty}(\abar)=\tp_\infty(\bbar)$, hence $\F\subseteq\F_\infty$.
Now assume $\F$ is any sharp back-and-forth system that is not equal to $\F_\infty$, and we show that its flattening $\B$ cannot be Hausdorff.
Since $\F_\infty\not\subseteq\F$, there is some  $n$ and $\abar,\bbar\in M^n$ with $(\abar,\bbar)\in\F_\infty\setminus\F$, i.e., $\tp_\infty(\abar)=\tp_\infty(\bbar)$, but
$\aa\neq\bb$, where $E_n=\F\cap M^{2n}$, $\aa=\abar/E_n$ and $\bb=\bbar/E_n$.  
Thus, $\aa,\bb\in\B$ and for every $\phi(\xbar)\in L_{\infty,\omega}$, $\B\models\phi^\flat(\aa)\leftrightarrow\phi^\flat(\bb)$.
So $\aa$ and $\bb$ witness that $\B$ is not Hausdorff.
\hfill\qedsymbol
\end{proof}

\section{Flat structures, the inverse map, and potential canonical Scott sentences}  \label{Hdfstuff}

\subsection{Axiomatizing flat structures}

We formalize the notion of a flat structure $\B$.  We will see that every flattening of a sharp expansion is a flat structure, and conversely, every {\em countable} $\B$
is the flattening of some sharp expansion of some countable $L$-structure.  
  Throughout, we have a fixed (countable) language $L$, along with the associated language $\Lflat$.
  
  As notation, for any $\Lflat$-structure $\B$ and any $\bb\in U_n$, let $$\Delta_{\bb}:=\{\hbox{quantifier-free}\ \beta(x_0,\dots,x_{n-1}):\B\models\beta^\flat(\bb)\}$$
  In particular, $\Delta_\bb$ is a set of  finitary $L$-formulas, all of whose free variables are among $\{x_0,\dots,x_{n-1}\}$.
  The definition below lists several conditions that hold of any flattening of a sharp structure $(M,\F)$.

\begin{definition} \label{flatdef}  Let $\Ax$ denote the following  $L^\flat$ axioms,\footnote{All but 1a) are first order axioms.} and call an $\Lflat$-structure $\B$ a {\em flat structure} if
$\B\models\Ax$.
\begin{enumerate} 
\item  Structural axioms:  
\begin{enumerate}
\item  The unary predicates $\{U_n:n\in\omega\}$ partition the universe.
\item  $U_0=\{\const\}$ is a singleton.
\item  For all  $k\le n$ and injective $f:k\rightarrow n$, $P^f_{k,n}$ is a (unary) function$:U_n\rightarrow U_k$.
\item  For all $n\in\omega$, $P^{id}_{n,n}:U_n\rightarrow U_n$ (where $id:n\rightarrow n$ is the identity) is the identity function.
\item  For all $n\in\omega$ and all $\bb\in U_n$, $\Delta_\bb$ is a complete quantifier-free type in the variables $(x_0,\dots,x_{n-1})$ (i.e.,
there is some $L$-structure $M$ and some $\abar\in M^n$ such that $M\models\beta(\abar)$ for all $\beta\in \Delta_\bb$ and, for every 
quantifier-free $\beta(x_0,\dots,x_{n-1})$, exactly one of $\beta,\neg\beta$ is contained in $\Delta_\bb$.)
\end{enumerate}
\item  Relational axioms:
\begin{enumerate}
\item  Composition: For all injective $f:k\rightarrow n$ and $g:n\rightarrow m$, and all $\aa\in U_m$, $P^{g\circ f}(\aa)=P^f_{k,n}(P^g_{n,m}(\aa))$.
\item  For all injective $f:k\rightarrow n$, all $\aa\in U_n$,  and all quantifier-free $\alpha_k(x_0,\dots,x_{k-1})$,
$$\alpha_k(x_0,\dots,x_{k-1})\in\Delta_{P^f_{k,n}(\aa)} \quad \hbox{if and only if} \quad \alpha_k(x_{f(0)},\dots,x_{f(k-1)})\in\Delta_\aa$$
\end{enumerate}
\item Equality Axiom:  For all $k\le n$, all injective $f,g:k\rightarrow n$ and all $\aa\in U_n$,
if $(\bigwedge_{i\in k} x_{f(i)}=x_{g(i)})\in \Delta_\aa$, then $P^f_{k,n}(\aa)=P^g_{k,n}(\aa)$.
\end{enumerate}

 {\bf As notation:} If $\aa\in U_k$ and $\bb\in U_n$,  $\aa\le\bb$ denotes $\aa=P^{id}_{k,n}(\bb)$
 (so $k\le n$).

\begin{enumerate} \setcounter{enumi}{3}
\item  Amalgamation:  For all $k\le n,m$, if $\aa\in U_k$, $\bb\in U_n$, $\cc\in U_m$, $\aa\le\bb$  and $\aa\le\cc$, then there is $\dd\in U_{n+m-k}$ such that
$\bb\le \dd$ and $P^v(\dd)=\cc$, where $v:m\rightarrow n+(m-k)$ is defined by
$v(i)=i$ for all $i<k$ and $v(i)=n+(i-k)$ for all
$k\le i<m$.


\item  Duplication:  $\B\models\sigma^\flat$, where $\sigma$ is the $L$-sentence $\forall x_0\exists x_1 (x_0=x_1)$, i.e.,
for every  $\aa\in U_1$, there is $\bb\in U_2$ such that $P^{f_0}(\bb)=P^{f_1}(\bb)=\aa$ for the two functions $f_0,f_1:1\rightarrow 2$ 
and $(x_0=x_1)\in\Delta_\bb$. 
 
\item  Constants:  For all constant symbols $c\in L$, $\B\models \sigma_c^\flat$, where $\sigma_c$ is the $L$-sentence $\exists x_0(x_0=c)$.
\item  Functions:  For all $k$-ary function symbols $f\in L$, $\B\models\sigma_f^\flat$, where $\sigma_f$ is the $L$-sentence
$\forall x_0\dots\forall x_{k-1}\exists x_k(x_k=f(x_0,\dots,x_{k-1}))$.
\end{enumerate}

\end{definition}

%
%
We remark that without additional conditions, the element $\dd$ obtained by Amalgamation is not uniquely described.


\subsection{Generalized subsequences}  \label{Generalized}


In preparation of Proposition~\ref{reverse}, where we recover an $L$-structure $M$ from a flat structure $\B$, 
we must show that elements of $\B$ encode tuples from $M$ 
with duplicate values.
We begin with a 
one-step lemma, whose proof follows immediately from Amalgamation, Duplication and Axiom 2b.

\begin{lemma} \label{onestep}   Let $\B$ be any flat structure.
Suppose $m\ge 1$, $j\in m$, and $\aa\in U_m$.  There is $\cc\in U_{m+1}$ such that $\aa\le\cc$ and $(x_j=x_m)\in \Delta_{\cc}$.
\end{lemma}

By iterating Lemma~\ref{onestep} $n$ times, we get the following lemma.

\begin{lemma}  \label{generalizedextension}  Suppose $f^*:n\rightarrow m$ is any function (not necessarily injective).
For any flat structure $\B$ and any $\aa\in U_m$,
there is a unique  $\cc\in U_{m+n}$ such that $\aa\le\cc$ and $(\bigwedge_{i\in n} x_{f^*(i)}=x_{m+i})\in\Delta_{\cc}$.
\end{lemma}

\begin{proof}  The existence of such a $\cc$ follows by iterating Lemma~\ref{onestep}  $n$ times.  As for uniqueness, suppose $\cc_1,\cc_2\in U_{m+n}$ both satisfy these conditions.
Let $\dd\in U_{m+2n}$ be an amalgamation of $\cc_1$ and $\cc_2$ over $\aa$.  
In particular,  $\cc_1\le \dd$, and $\cc_2=P^g_{m+n,m+2n}$, where $g$ is defined as
$g(i)=i$ for all $i<m$ and $g(i)=i+n$ if $m\le i<m+n$.
  Because of the Equality Axiom, to conclude that $\cc_1=\cc_2$, it suffices to show 
that $(x_{id(i)}=x_{g(i)})\in \Delta_\dd$ for all $i<m+n$.  
We verify this by cases.  
For $i<m$,  since $\aa\le\cc_2$ we have $g(i)=i$, hence $(x_i=x_{g(i)})\in\Delta_\dd$ trivially.
For
$m\le i<m+n$, let $j=i-m$.  
By hypothesis, since $i=m+j$,  $(x_{f^*(j)}=x_{i})$ is in both $\Delta_{\cc_1}$ and $\Delta_{\cc_2}$.  
Since $\cc_1\le\dd$, we have $(x_{f^*(j)}=x_{i})\in\Delta_{\dd}$ as well.  But $P^g_{m+n,m+2n}(\dd)=\cc_2$ implies
$(x_{g(f^*(j))}=x_{g(i)})\in\Delta_\dd$.  Because $f^*(j)<m$, $g(f^*(j))=f^*(j)$, so $(x_{f^*(j)}=x_{g(i)})\in\Delta_{\dd}$.
As $\Delta_\dd$ is a complete q.f.\ type, we conclude $(x_{i}=x_{g(i)})\in\Delta_\dd$.
Thus, as noted above, we conclude $\cc_1=\cc_2$ by the Equality Axiom.
\hfill\qedsymbol\end{proof}

In light of this uniqueness result, we make the following definitions.  For the remainder of this subsection, we assume $\B$ is a fixed flat structure, and we work inside $\B$.

\begin{definition} Suppose $f^*:n\rightarrow m$ is any function and $\aa\in U_m$. 
 
The {\em $f^*$-blowup of $\aa$} is the (unique) $\cc\in U_{m+n}$ satisfying
$\aa\le\cc$ and $(\bigwedge_{i\in n} x_{f^*(i)}=x_{m+i})\in\cc$.   

For any integer $\ell$, the (injective) {\em $\ell$-shift function $v_\ell:n\rightarrow n+\ell$} is defined by $v_\ell(i)=\ell+i$.

The {\em generalized projection} $P^{f^*}(\aa)$ is the element $\bb\in U_n$ satisfying
$\bb= P^{v_m}_{n,m+n}(\cc)$, where $\cc$ is the $f^*$-blowup of $\aa$ and $v_m:n\rightarrow m+n$ is the $m$-shift function.

\end{definition}

We show that these generalized projections satisfy statements analogous to the axioms for (hardwired) projections $P^f$ (for injective functions $f$).
We begin with a Generalized Equality criterion.

\begin{lemma}  Suppose $f^*,g^*:n\rightarrow m$ and $\aa\in U_m$ satisfies $(\bigwedge_{i\in n} x_{f^*(i)}=x_{g^*(i)})\in\Delta_\aa$.  Then $P^{f^*}(\aa)=P^{g^*}(\aa)$.
\end{lemma}

\begin{proof} Let $\cc\ge\aa$ denote the $f^*$-blowup of $\aa$.  As $f^*(i)=g^*(i)$ for each $i\in n$, it follows from Lemma~\ref{generalizedextension} that $\cc$ is also the $g^*$-blowup of
$\aa$.  Thus, by definition, $P^{f^*}(\aa)=P^v_{n,m+n}(\cc)=P^{g^*}(\aa)$.
\hfill\qedsymbol\end{proof}

To show that Generalized Composition holds, we introduce some more notation.   Given a sequence $\<\aa_0,\dots,\aa_{\ell-1}\>\in\B^\ell$ with $\aa_i\in U_{k_i}$,
let $s_i=\sum_{j\le i} k_j$ and let $s=s_{\ell-1}$.  For each $i < \ell$, let $f_i: k_i \rightarrow s$ be the $s_{i-1}$-shift function defined by $f_i(j) = s_{i-1}+j$ (where we take $s_{-1} = 0$). 
A {\em joint embedding of  $\<\aa_0,\dots,\aa_{\ell-1}\>$} is any $\dd\in U_s$ such that $P^{f_i}(\dd)=\aa_i$ for every $i\le\ell$. $\dd$ is not uniquely determined. 

It is easily checked that for any sequence $\<\aa_0,\dots,\aa_{\ell-1}\>$ and any joint embedding $\dd\in U_s$, 
$\aa_0\le\dd$ and, letting $v_{k_0}:(s-k_0)\rightarrow s$ be the $k_0$-shift function, the 	`truncation' $P^{v_{k_0}}(\dd)$ is a joint embedding of $\<\aa_1,\dots,\aa_{\ell-1}\>$.
Given this, Generalized Composition follows easily.

\begin{lemma}  Suppose $f^*:k\rightarrow n$ and $g^*:n\rightarrow m$ are arbitrary functions.  For any $\aa\in U_m$  we have
$P^{g^*f^*}(\aa)=P^{f^*}(P^{g^*}(\aa))$.
\end{lemma}

\begin{proof} Let $\bb\in U_{m+n}$ be the $g^*$-blowup of $\aa$, let $\cc\in U_{m+k}$ be the $g^*f^*$-blowup of $\aa$, and, as $\aa\le\bb$ and $\aa\le\cc$, let
$\dd$ be an  amalgamation of $\bb$ and $\cc$ over $\aa$.  [In fact, $\dd$ is uniquely determined, but we don't need this.]
Then $\dd$ is a joint embedding of $\<\aa,P^{g^*}(\aa),P^{g^*f^*}(\aa)\>$. 
 Let $\ee=P^{v_m}(\dd)$ be the truncation of $\dd$; so by the preceding remarks, $\ee$ is a joint embedding of $\<P^{g^*}(\aa),P^{g^*f^*}(\aa)\>$.  We claim that $\ee$ is the $f^*$-blowup of
$P^{g^*}(\aa)$.  This observation completes the proof, since the $f^*$-blowup of $P^{g^*}(\aa)$ is a joint embedding of $\<P^{g^*}(\aa),P^{f^*}(P^{g^*}(\aa))\>$; and hence, by comparing truncations, we get $P^{g^*f^*}(\aa)=P^{f^*}(P^{g^*}(\aa))$.

To prove the claim, since $\ee$ is a joint embedding,  $P^{g^*}(\aa)\le \ee$.  It remains to show that $(x_{f^*(i)}=x_{n+i})\in \Delta_\ee$ for all $i<k$.
Fix $i<k$ and let $j=f^*(i)\in n$.   Now, in $\bb$, $(x_{g^*(j)}=x_{m+j})\in\Delta_\bb$, hence in $\Delta_\dd$, since $\bb\le\dd$.
On the other hand, in $\cc$, $(x_{g^*f^*(i)}=x_{m+i})\in\Delta_\cc$, hence $(x_{g^*f^*(i)}=x_{m+n+i})\in\Delta_\dd$ from the amalgamation.  As $\Delta_\dd$ is a complete type, these two statements imply $(x_{m+j}=x_{m+n+i})\in\Delta_\dd$.
As $\ee=P^{v_m}(\dd)$ and recalling $j=f^*(i)$, we have $(x_{f^*(i)}=x_{n+i})\in\Delta_\ee$, as required.
\hfill\qedsymbol\end{proof}

\subsection{Reconstructing $(M,\F)$ from a countable $\B$}

Throughout this section we fix a countable language $L$ and a countable, flat structure $\B$ in the language $\Lflat$.  Armed with our results from Subsection~\ref{Generalized}, our index
functions $f:k\rightarrow m$ are not assumed to be injective, but to ease notation we write $f$ in place of $f^*$.
The goal of this subsection is to prove the following proposition.

\begin{proposition} \label{reverse} For every countable flat structure $\B$, there is a countable $L$-structure $M$ and a sharp back-and-forth system $\F$ such that
$(M,\F)^\flat\cong \B$.  The sharp structure $(M,\F)$ is unique up to $L^\sharp$-isomorphism.  
Moreover, 
any $\Lflat$-isomorphism $\sigma^\flat:\B_1\rightarrow \B_2$ of countable flat structures
lifts to an $\Lsharp$-isomorphism $\Phi^*:(M_1,\F_1)\rightarrow (M_2,\F_2)$ satisfying $\sigma^\flat(\pi^1_n(\abar))=\pi^2_n(\Phi^*(\abar))$ for all $n$ and all $\abar\in M_1^{n}$.
\end{proposition}

Towards the proof of Proposition~\ref{reverse}, we introduce a new notion.

\begin{definition}   A {\em cofinal sequence in $\B$} $\<\aa_n:n\in\omega\>$ satisfies: For all $n\in\omega$
\begin{enumerate}
\item  $\aa_n\in U_n$ and $\aa_n\le\aa_{n+1}$; and
\item  For all $n,k,\ell\in\omega$, for all $\bb\in U_k$ and $\cc\in U_\ell$, if $\bb=P^f_{k,n}(\aa_n)$ and $\bb=P^g_{k,\ell}(\cc)$,
then there is an $m\ge n$ and a function $g':\ell\rightarrow m$ such that $g'\circ g=id\circ f$ and $P^{g'}(\aa_m)=\cc$.
\end{enumerate}

\end{definition}  

That is, a cofinal sequence represents a kind of Fra\"iss\'e sequence of elements from $\B$.  The following lemma is easily achieved by proper bookkeeping.

\begin{lemma}  Every countable flat structure $\B$ has a cofinal sequence.  In fact, any finite sequence $\<\aa_i:i<n\>$ satisfying $\aa_i\in U_i$ and $\aa_i\le\aa_{i+1}$ for each $i$ can be extended to a cofinal sequence.
\end{lemma}

\begin{Construction} \label{construction}
Let $\B$ be a countable flat structure  and fix a cofinal sequence $\<\aa_n:n\in\omega\>$.  We construct a countable $L$-structure $M$ and a sharp back-and-forth system $\F=\{E_n:n\in\omega\}$ such that $(M,\F)^\flat$ is $\Lflat$-isomorphic to $\B$ as follows:
\end{Construction}

\begin{itemize}
\item  First, as $\aa_n\le\aa_{n+1}$, we have $\Delta_{\aa_n}\subseteq\Delta_{\aa_{n+1}}$, and as each $\Delta_{\aa_n}$ describes a complete, quantifier-free type in the variables
$(x_0,\dots,x_{n-1})$, the union $q(x_0,x_1,\dots):=\bigcup_{n\in\omega} \Delta_{\aa_n}$ describes a complete, quantifier-free type in $\omega$ variables $X=\{x_i:i\in\omega\}$.

\item Next, define an equivalence relation $\sim$ on $X$ as:  $x_i\sim x_j$ iff $(x_i=x_j)\in q$.  As $q$ is a complete type, $\sim$ indeed is an equivalence relation.

\item  Define an $L$-structure $M$ to have universe $X/{\sim}$.  For any quantifier-free $L$-formula $\alpha$ 
with $n$ free variables and any $(i_0,\dots,i_{n-1})\in\omega^n$, say $M\models\alpha([x_{i_0}],\dots, [x_{i_{n-1}}])$ if and only if $\alpha(x_{i_0},\dots,x_{i_{n-1}})\in q$.
It is readily checked that this is well defined and defines an $L$-structure.

\item  Additionally, for each $n\in \omega$, define a {\em covering map} $cov_n:M^n\rightarrow U_n$ as:  For any $\abar=(a_0,\dots,a_{n-1})\in M^n$ (so each $a_j$ is a $\sim$-class of elements from $X$), let $cov_n(\abar)=P^h(\aa_m)$ for any/all sufficiently large $m$ and any function $h:n\rightarrow m$ satisfying $x_{h(j)}\in a_j$ for all $j<n$.
Again, it is easily checked that this definition does not depend on our choice of $m$ or $h$.

\item  Finally, for each $n\in\omega$, define $E_n(\xbar,\ybar)$ on $M^{2n}$ as $E_n(\abar,\bbar)\Leftrightarrow cov_n(\abar)=cov_n(\bbar)$ and let $\F:=\bigcup_{n\in\omega} E_n$.
\end{itemize}

\begin{remark}  \label{cf} 
It is easily verified that if $(M,\F)$ is a countable, sharp structure and $\B=(M,\F)^\flat$ is its flattening, then for any enumeration $\<a_i:i\in\omega\>$ of $M$, Construction~\ref{construction} is an inverse
of the flattening map.  That is, taking $\aa_n:=\pi_n(\<a_i:i<n\>)$, 
the sequence $\<\aa_n:n\in\omega\>$ is cofinal, and taking   $cov_n:=\pi_n$ for each $n$, the sharp structure  we obtain from Construction~\ref{construction} is $\Lsharp$-isomorphic to $(M,\F)$.

\end{remark}

Much of the verification that  $\F$ is a sharp back-and-forth system and that $(M,\F)^\flat\cong \B$ 
 is codified with the following fundamental lemma, which will be used in later sections as well.

\begin{lemma}  \label{stepup}  Suppose $n\in\omega$, $\abar\in M^n$, $cov_n(\abar)=\bb\in U_n$, and $\cc\in U_{n+k}$ with $\bb\le\cc$.
Then there is a $k$-tuple $\cbar\in M^k$ such  that $cov_{n+k}(\abar\cbar)=\cc$.
\end{lemma}

\begin{proof}  Arguing by induction on $k$, it suffices to prove this when $k=1$.
Say $\abar=([x_{i_0}],\dots,[x_{i_{n-1}}])$ and choose any $m>\max\{i_0,\dots,i_{n-1}\}$.  Let $f:n\rightarrow m$ be given by $f(j)=i_j$.
By the definition of $cov_n$, $P^f_{n,m}(\aa_m)=\bb$.  As $\<\aa_k:k\in\omega\>$ is cofinal, there is some $\ell\ge m$ and $h:n+1\rightarrow\ell$
such that $id\circ f=h\circ id$ and $P^h(\aa_\ell)=\cc$.  The commutativity of the functions gives $h(j)=f(j)$ for all $j<n$, so taking $c=[x_{h(n)}]$
we have $cov_{n+1}(\abar c)=\cc$, as required.
\hfill\qedsymbol\end{proof}

Using Lemma~\ref{stepup}, we now verify that  the statements of Proposition~\ref{reverse}.
First,  it is easily checked that $\F$ is a sharp back-and-forth system on $M$.
Second, taking $n=0$ and $k$ arbitrary, we see that $cov_k:M^k\rightarrow U_k$ is onto, so $\B$ is isomorphic to the flattening of $(M,\F)$, with $cov_n$
playing the role of $\pi_n$.

The uniqueness statement follows from the Moreover clause, which we now establish. 
Fix an $\Lflat$-isomorphism $\sigma^\flat:\B_1\rightarrow\B_2$ between countable flat structures. 
Choose any countable sharp structures $(M_1,\F_1)$, $(\M_2,\F_2)$ whose flattenings are $\B_1,\B_2$, respectively.
For each $\ell=1,2$, by possibly replacing $(M_\ell,\F_\ell)$ by an isomorphic copy, by Remark~\ref{cf} we may assume it is obtained from $\B_\ell$ via Construction~\ref{construction}, 
where $cov^\ell_n=\pi^\ell_n$ for each $n$.
To build an $\Lsharp$-isomorphism,  consider the set of all partial bijections $\Phi:\abar\rightarrow\bbar$, where $\abar\in M_1^n$, $\bbar\in M_2^n$, and 
$\sigma^\flat(\pi_n^1(\abar))=\pi_n^2(\bbar)$.
Clearly, every such $\Phi$ preserves quantifier-free types, as, letting $\cc=\pi_n^1(\abar)$, the $L$-quantifier-free types $\qftp_{M_1}(\abar)$
and $\qftp_{M_2}(\bbar)$ can be read off from $\Delta_{\cc}$ and $\Delta_{\sigma^\flat(\cc)}$.

By Lemma~\ref{stepup}, this family of partial bijections $\Phi$ is a back and forth system.  As both $M_1$ and $M_2$ are countable, it follows that
there is an $L$-isomorphism $\Phi^*:M_1\rightarrow M_2$ that satisfies $\sigma^\flat(\pi_n^1(\abar))=\pi_n^2(\Phi^*(\abar))$ for every $\abar\in M_1^n$.  But this condition implies
that $\Phi^*$ is also $E_n$-preserving:  If $E_n(\abar,\abar')$ in $M_1$, then by definition of $E_n$, we have $\pi_n^1(\abar)=\pi_n^1(\abar')$.  From the sentence above,
$\pi_n^2(\Phi^*(\abar))=\pi_n^2(\Phi^*(\abar'))$, so $E_n(\Phi^*(\abar),\Phi^*(\abar'))$ holds in $M_2$ as well.  Thus, $\Phi^*$ is an $\Lsharp$-isomorphism from
$(M_1,\F_1)$ onto $(M_2,\F_2)$.

%

%
%


\subsection{Hausdorff flat structures are potential canonical Scott sentences}

Fix a countable language $L$.  The aim of this section is to distinguish a canonical class $\Hdf^*$ of Hausdorff flat structures in $\V$.
We will  see that
 every Hausdorff  flat structure is isomorphic  [equivalently, back-and-forth equivalent by Lemma~\ref{flatequiv}] to exactly one canonical structure, so we can naturally identify $\Hdf^*$ with 
 isomorphism classes [equivalently, back and forth classes] of Hausdorff flat structures. Further, we will show that $\Hdf^*$ is naturally bijective with $\CSS(L)_{\ptl}$.



As a preamble, we describe
 some expansions of $L$.  For each $k\in\omega$, let $L(\cbar_k):=L\cup\{c_0,\dots,c_{k-1}\}$ (so $L_0=L$) where $\{c_0,\dots,c_{k-1}\}$ are new constants.
For any $L(\cbar_k)$-structure $N=(M,\abar)\in HC$, let $\css_k(M,\abar)$ denote the canonical Scott sentence of $N=(M,\abar)$.
For  any sentence $\sigma\in (L(\cbar_k))_{\infty,\omega}$, let $\hat{\sigma}(\xbar)$ be the $L_{\infty,\omega}$ formula formed by replacing each $c_i$ by $x_i$.
Note that $\hat{\sigma}(\cbar_k)$ is persistently equivalent to $\sigma$, i.e.,
for any forcing extension $\V[G]\supseteq
\V$ and any $L(\cbar_k)^{\V[G]}$-structure $N=(M,\abar)$, $N\models\sigma$ if and only if $M\models\hat{\sigma}(\abar)$.

\begin{definition}  \label{3.13}  For any language $L$, call two $L_{\infty,\omega}$-sentences $\sigma$ and $\psi$ {\em explicitly contradictory} if $\sigma\wedge\tau$ has no models in any
forcing extension $\V[G]\supseteq\V$.  An $L_{\infty,\omega}$-sentence $\delta$ is {\em persistently valid} if, for every forcing extension $\V[G]\supseteq\V$,
every $M\in\Mod_{\V[G]}(L)$ models $\delta$.

\end{definition}

As examples, note that for any $k\in\omega$, it follows from Fact~\ref{later} that any two canonical Scott sentences $\css_k(M,\abar)$ and $\css_k(M',\abar')$ in 
$(L(\cbar_k))_{\omega_1,\omega}$ are explicitly contradictory or equal.  
Note that $\sigma$ and $\tau$ are explicitly contradictory  $(L(\cbar_k))_{\infty,\omega}$-sentences if and only if  the $L_{\infty,\omega}$-sentence
$\forall\xbar (\hat{\sigma}(\xbar)\rightarrow\neg\hat{\tau}(\xbar))$ is persistently valid.

\bigskip


We begin in the countable world, i.e., in HC.  Recalling Definition~\ref{flatdef}, let  $\Flat_{HC}(L):=\Mod_{HC}(\Ax)$, the flat $\Lflat$-structures in $HC$.
Note that as satisfaction is absolute between transitive models of set theory, the classes $\Flat_{HC}(L)$ and $\Mod_{HC}(L)$ are
strongly definable, i.e., are defined by $HC$-invariant formulas.

\begin{definition}   Let $\can:\Mod_{HC}(L)\rightarrow \Flat_{HC}(L)$ be the following canonical map.
Given $M\in \Mod_{HC}(L)$, $\can(M)$ is the  flat structure  with universe
$$\{\css_k(M,\abar):k\in\omega,\abar\in M^{k}\}$$
where the $\Lflat$ symbols are interpreted as: $U_k(\can(M))=\{\css_k(M,\abar):\abar\in M^k\}$; 
$\can(M)\models \alpha^\flat(\css_k(M,\abar))$ if and only if
$M\models\alpha(\abar)$
for each quantifier-free $L$-formula $\alpha(x_0,\dots,x_{k-1})$;
and for an injective $f:k\rightarrow n$, $P^f_{k,n}(\css_n(M,\abar))=\css_k(M,\abar\mr{f})$.

\end{definition}

  For any
$M\in\Mod_{HC}(L)$, the structure $\can(M)$ is fundamentally an unpacking of the Scott sentence of $M$, 
and hence only depends on the isomorphism type of $M$.
As well, the mappings $\css_n(\abar)\mapsto \pi_n(\abar)$ give an $\Lflat$-isomorphism between
$\can(M)$ and the  flattening $M^\flat=(M,\F_\infty)^\flat$.  In more detail:

\begin{lemma}  \label{samemodel}  If $M,N\in\Mod_{HC}(L)$, then $M\cong N$ if and only if $\can(M)=\can(N)$.
\end{lemma}

\begin{proof}  
Right to left is easy, as taking $k=0$, $U_0(\can(M))=U_0(\can(N))=\{\css(M)\}$.  
Thus, $M\cong N$ by the salient property of canonical Scott sentences.
Now assume $h:M\rightarrow N$ is an $L$-isomorphism.  Then, for any $k\in\omega$ and any $\abar\in M^k$, $(M,\abar)\cong (N,h(\abar))$ as $L(\cbar_k)$-structures,
hence $\css_k(M,\abar)=\css_k(N,h(\abar))$.  Thus, for each $k$ we have
$$\{\css_k(M,\abar):\abar\in M^k\}=\{\css_k(N,\bbar):\bbar\in N^k\}$$
so the universes of $\can(M)$ and $\can(N)$ are equal.  Similarly, it is easily checked that the $\Lflat$-interpretations in  $\can(M)$ and $\can(N)$ are equal as well.
Thus, $\can(M)=\can(N)$.
\hfill\qedsymbol\end{proof}

Using Lemma~\ref{samemodel} and the uniqueness statement of Proposition~\ref{reverse}, the following map is well-defined.

\begin{definition} Let $\bcan:\Flat_{HC}(L)\rightarrow\Flat_{HC}(L)$ be defined as:\\ $\bcan(\B):=\can(M)$, where $M$ is any $L$-structure obtained from
Proposition~\ref{reverse}, i.e., where $(M,\F)^\flat\cong \B$ for some $\F$.  

Let
$\Hdf^*_{HC}$ denote the image of $\bcan$, which is also the image of $\can$.

\end{definition}

This  notation is justified by the following lemma.

\begin{lemma}  \label{canfacts}
\begin{enumerate}
\item  Every $\B'\in\Hdf_{HC}^*$ is Hausdorff.
\item  For all  Hausdorff $\B_1,\B_2\in\Flat_{HC}(L)$, $\B_1\cong\B_2$ if and only if $\bcan(\B_1)=\bcan(\B_2)$.
\item  $\bcan:\Flat_{HC}(L)\rightarrow\Flat_{HC}(L)$ is a projection onto $\Hdf_{HC}^*$, i.e., $\bcan(\bcan(\B))=\bcan(\B)$ for all $\B\in\Flat_{HC}(L)$.
\item  $\Hdf^*_{HC}=\{\B\in\Flat_{HC}(L):\bcan(\B)=\B\}$.
\end{enumerate}
\end{lemma}

\begin{proof} (1)  Fix distinct $\aa\neq\bb\in\B'$.   For definiteness, assume $\aa,\bb\in U_k(\B')$.
 Choose any $M\in\Mod_{HC}(L)$ with $\can(M)=\B'$ and suppose $\aa=\css_k(M,\abar)$ and $\bb=\css_k(M,\bbar)$.
 Let $\sigma:=\css_k(M,\abar)$ and $\tau:=\css_k(M,\bbar)$.  As these are distinct canonical Scott sentences, $\sigma$ and $\tau$ are explicitly contradictory.
 Thus, $M\models\forall\xbar (\hat{\sigma}(\xbar)\rightarrow\neg\hat{\tau}(\xbar))$.  From this, we have $M\models\hat{\tau}(\bbar)\wedge\neg\hat{\tau}(\abar)$.
 As $\B'\cong M^\flat$,  $\B'\models\hat{\tau}^\flat(\bb)\wedge\neg\hat{\tau}^\flat(\aa)$.  Thus, $\B'$ is Hausdorff.

(2)  
Since for $\ell=1,2$, each $\B_\ell$ is Hausdorff, by Lemma~\ref{2.9} and Proposition~\ref{reverse} we can choose  $L$-structures $M_\ell$ such that $M_\ell^\flat\cong\B_\ell$.
First, assume $\bcan(\B_1)=\bcan(\B_2)$.  Then $\can(M_1)=\can(M_2)$, so $M_1\cong M_2$ by Lemma~\ref{samemodel}.  It follows by the uniqueness of flattenings
that
$\B_1\cong\B_2$ as $L^\flat$-structures.
Conversely, assume $\B_1\cong\B_2$.  Then, by the uniqueness sentence of Proposition~\ref{reverse}, $M_1\cong M_2$,
so again by Lemma~\ref{samemodel}, $\can(M_1)=\can(M_2)$. Thus, $\bcan(\B_1)=\bcan(\B_2)$.

(3)  Given $\B\in \Flat_{HC}(L)$, by Proposition~\ref{reverse} choose a sharp structure $(M,\F)$ such that $(M,\F)^\flat\cong \B$.
Put $\B':=\bcan(\B)$, the latter being  equal to $\can(M)$.  
However, since $\B'\cong(M,\F_\infty)^\flat$, $\bcan(\B')=\can(M)$ follows immediately.
Thus, $\bcan(\B')=\can(M)=\B'$, as required.

(4)  Choose any $\B\in\Hdf^*_{HC}$.  That $\bcan(\B)=\B$ is given by (3).  Conversely, assume $\bcan(\B)=\B$.  Then trivially, $\B\in\Hdf^*_{HC}$ by its definition.
\hfill\qedsymbol\end{proof}

So far, in the HC world, the surjective maps $\can:\Mod_{HC}(L)\rightarrow\Hdf^*_{HC}$ and $\bcan:\Flat_{HC}(L)\rightarrow\Hdf^*_{HC}$ are somewhat dual.  However, this similarity will
dissipate when we take potentials.  This is due not only to the fact that $\bcan$ is a projection, but also because there is a preferred homomorphism $j:\B\rightarrow\bcan(\B)$
that we now describe.

\begin{definition}  Suppose $\B,\B'\in \Flat_{HC}(L)$.  A {\em surjective homomorphism $j:\B\rightarrow\B'$} is an onto map such that: (1) for all $\aa\in \B$,
$\B\models U_k(\aa)$ iff $\B'\models U_k(j(\aa))$; (2) for all quantifier-free $L$-formula $\alpha(\xbar)$
$\B\models\alpha^\flat(\aa)$ iff $\B'\models\alpha_k^\flat(j(\aa))$; and (3) $j$ commutes with each $P^f_{k,n}$.

\end{definition}

  Recall that by Definition~\ref{flatformulas}, for any flat structure $\B$ and any $\aa\in\B$, $\flattp_{\B}(\aa)=\{\phi^\flat(z):\B\models\phi^\flat(\aa)\}$.
The following Lemma is proved by  induction on the complexity of $L_{\infty,\omega}$-formulas.

\begin{lemma}  \label{keyH}  Let $j:\B\rightarrow\B'$ be any surjective homomorphism of flat structures.  Then, for any $\aa\in\B$,
$\flattp_{\B}(\aa)=\flattp_{\B'}(j(\aa))$.
\end{lemma}

\begin{lemma}  \label{uniquemap}  For any $\B,\B'\in \Flat_{HC}(L)$ with $\B'$ Hausdorff, there is at most one
surjective homomorphism $j:\B\rightarrow\B'$.  
In particular, for any $\B\in \Flat_{HC}(L)$, there is a unique surjective homomorphism $j:\B\rightarrow\bcan(\B)$.
\end{lemma}

\begin{proof}  Choose surjective homomorphisms $j,j^*:\B\rightarrow\B'$ and choose any $\aa\in\B$.  Let $\bb:=j(\aa)$ and $\bb^*:=j^*(\aa)$.
To show that $\bb=\bb^*$, by applying Lemma~\ref{keyH} twice, we have $\flattp_{\B'}(\bb)=\flattp_{\B'}(\bb^*)$.  Thus, $\bb=\bb^*$ since $\B'$ is Hausdorff.

For the second sentence, choose any $\B\in\Flat_{HC}(L)$, and let $(M,cov)$ be from Proposition~\ref{reverse} such that 
the corresponding $(M,\F)^\flat\cong\B$.  
It is easily checked that the map $j:\B\rightarrow\bcan(\B)$ defined by $j(\aa)=\css_k(M,\abar)$ for some or any $\abar\in M^k$ such that $cov_k(\abar)=\aa$
is well defined and is a surjective homomorphism.  As $\bcan(\B)$ is Hausdorff from Lemma~\ref{canfacts}(1), the uniqueness of $j$ follows from the first sentence.
\hfill\qedsymbol\end{proof}

Lemma~\ref{uniquemap} gives that the function below is well defined.

\begin{definition}  Let $\cmap:\Flat_{HC}(L)\rightarrow HC$ be the function described as:  $\cmap(\B)$ is the unique surjective homomorphism $j:\B\rightarrow\bcan(\B)$.

\end{definition}

We glean one easy consequence of this.  The characterization will be useful in proving both that the class of Hausdorff structures in HC is $HC$-invariant, and that `being Hausdorff'
is absolute among forcing extensions.

\begin{lemma} \label{charHdf}   The following are equivalent for  $\B\in\Flat_{HC}(L)$.  
\begin{enumerate}
\item  $\B$ is Hausdorff;
\item   $\cmap(\B)$ is injective; and  
\item  $\cmap(\B)$ is an $\Lflat$-isomorphism.
\end{enumerate}
\end{lemma}

\begin{proof}  $(1)\Rightarrow(2)$:
Choose any $\B\in\Flat_{HC}(L)$ and denote $\bcan(\B)$ as $\B'$ and $\cmap(\B)$ as $j:\B\rightarrow\B'$.
First, assume $\B$ is Hausdorff and choose $\aa\neq\bb$ from $\B$.  As $\B$ is Hausdorff, $\flattp_{\B}(\aa)\neq\flattp_{\B}(\bb)$, so
$\flattp_{\B'}(j(\aa))\neq\flattp_{\B'}(j(\bb))$ by Lemma~\ref{keyH}.  Thus, $j(\aa)\neq j(\bb)$, so $j$ is injective.

$(2)\Rightarrow(3)$ is immediate from the definitions.

$(3)\Rightarrow(1)$:  Since 
 $\bcan(\B)$ is Hausdorff by Lemma~\ref{canfacts}(1), (3) implies 
$\B$ is as well.
\hfill\qedsymbol\end{proof}

Let $\Hdf_{HC}:=\{\B\in\Flat_{HC}(L):\B$ is Hausdorff$\}$.
We consider forcing extensions $\V[G]$ of $\V$.

\begin{lemma}  \label{allHCinvariant}
\begin{enumerate}
\item  $\can$, $\bcan$, and $\cmap$ are strongly definable and are persistently functions.
\item  Both  $\Hdf_{HC}^*$ and $\Hdf_{HC}$ are strongly definable sets.
\item  The identity map $id:\Hdf_{HC}^*\rightarrow \Flat_{HC}$ is a persistent, strongly definable cross section of $\bcan:\Flat_{HC}\rightarrow\Hdf_{HC}^*$.
\end{enumerate}
\end{lemma}

\begin{proof}  (1)  We first show $\can$ is HC-invariant and is persistently a function.
Choose any $M\in\Mod_{HC}(L)$ and let $\V[G]\supseteq\V$ be any forcing extension.  
As $\css$ is strongly definable, for every $\abar\in M^k$, we have $\css_k(M,\abar)^{\V[G]}=\css_k(M,\abar)^{\V}$.
It follows from this that $\can(M)^{\V[G]}=\can(M)^{\V}$.   Thus, $\can$ is strongly definable.
The argument that $\can$ is well defined also holds in $V[G]$, hence
$\can$ is persistently a function.

The argument for $\bcan$ is similar.  Given $\B\in\Flat_{HC}(L)$, note that any $M\in\V$ witnessing $\bcan(\B)=\can(M)$ also witnesses this in $\V[G]$.
Thus, $\bcan(\B)^{\V[G]}=\bcan(\B)^{\V}$.  That $\bcan$ is persistently a function follows by applying Lemma~\ref{samemodel} in $\V[G]$.

Now, for $\cmap$, choose $\B\in\Flat_{HC}(L)$ and let $j:=\cmap(\B)$ in $\V$.  Then easily, in $\V[G]$ we also have
$j:\B\rightarrow\bcan(\B)$ is a surjective homomorphism.  By applying Lemma~\ref{uniquemap} to $j$ in $\V[G]$ it is the only one,
hence $\cmap(\B)^{\V[G]}=j$ as well.   Thus, $\cmap$ is strongly definable and is persistently a function.

(2)  For $\Hdf_{HC}^*$ we use Lemma~\ref{canfacts}(4).  Choose $\B\in HC^{\V}$.
First, suppose  $\B\in(\Hdf_{HC}^*)^{\V}$ and let $\V[G]\supseteq\V$ be any forcing extension.
Then $\B=\can(\B)$ in both $\V$ and  $\V[G]$, so $\B\in(\Hdf_{HC}^*)^{\V[G]}$ by applying Lemma~\ref{canfacts}(4) in $\V[G]$.
Conversely, assume  $\B\in(\Hdf_{HC}^*)^{\V[G]}$.  As  $\Flat(L)$, the class of all models of $\Ax$, is absolute between forcing extensions and $\B\in HC^{\V}$,
 we know $\B\in\Flat_{HC}(L)^{\V}$.
By Lemma~\ref{canfacts}(4) in $\V[G]$, $\bcan(\B)=\B$ in $\V[G]$.  As $\bcan$ is strongly definable,
 this implies $\bcan(\B)=\B$ in $\V$ as well.  Thus, $\B\in(\Hdf_{HC}^*)^{\V}$.

Showing that $\Hdf_{HC}$ is strongly definable is similar, using  Lemma~\ref{charHdf} in both $\V$ and $\V[G]$.
On one hand, if $\B\in\Hdf_{HC}(L)$ in $\V$, then letting $j=\cmap(\B)$, $j:\B\rightarrow\bcan(\B)$ is injective, both in $\V$ and (from above) in
$\V[G]$.  Thus, $\B\in\Hdf_{HC}(L)$ in $\V[G]$ as well.
Conversely, if $\B\in HC^{\V}$ and $\B\in\Hdf_{HC}(L)^{\V[G]}$.  
Then $j:\B\rightarrow\bcan(\B)$ is injective in both $\V[G]$ and $\V$, so $\B\in \Hdf_{HC}(L)$.

(3)  This is obvious by unpacking the definitions.
\hfill\qedsymbol\end{proof}

It is noteworthy that the verification of $\Hdf^*$ being strongly definable used the characterization of it in terms of the fixed points of $\bcan$
as opposed to anything about the strongly definable function $\can$.  This asymmetry will be magnified 
in Conclusion~\ref{Hdfconc}, where we show that $\Hdf^*_\ptl$
is the image of $\bcan_\ptl$, but the image of $\can_\ptl$ might be smaller.  Indeed, this distinction manifests the difference between $\CSS(L)_{\rm sat}$ and $\CSS(L)_\ptl$
described in the Introduction.

\bigskip

We now leave the HC world and enter into the world of all sets by applying the $\ptl$ operator.
Note that as `being a structure' and `being a flat structure' are absolute,
$\Mod(L)_\ptl=\Mod(L)$, the class of all $L$-structures, and similarly, $\Flat(L)_\ptl$ is the class of all flat $\Lflat$-structures, which we denote
by $\Flat(L)$.   In light of Lemma~\ref{allHCinvariant}, and Lemma~\ref{ptl} we have the following:

\begin{itemize}
\item  $\can_{\ptl}:\Mod(L)\rightarrow \Hdf^*_\ptl$ defined as:  $\can_{\ptl}(M)=\can(M)^{\V[G]}$ for some/every forcing extension $\V[G]\supseteq\V$
with $M\in HC^{\V[G]}$;
\item  $\bcan_{\ptl}:\Flat(L)\rightarrow \Hdf^*_{\ptl}$ defined as:  $\bcan_{\ptl}(\B)=\bcan(\B)^{\V[G]}$ for some/every forcing extension $\V[G]\supseteq\V$;
with $\B\in HC^{\V[G]}$;

\item  $\cmap_{\ptl}:\Flat_{\V}(L)\rightarrow \V$ defined as:  $\cmap_{\ptl}(\B)$ is the unique $j:\B\rightarrow\bcan(\B)$, as computed in some/every forcing
extension 
 $\V[G]\supseteq\V$ with $\B\in HC^{\V[G]}$.
 \end{itemize}
 
%
 We isolate the following for emphasis.
 
 \begin{lemma} \label{onto} The class functional $\bcan_{\ptl}:\Flat(L)\rightarrow \Hdf^*_\ptl$ is a (surjective) projection.
 \end{lemma}
 
%

 \begin{proof}  This follows immediately from   Lemmas \ref{cross} and \ref{allHCinvariant}.
 \hfill\qedsymbol\end{proof}
 
 \begin{lemma}  \label{canHDF}  For any flat structure $\B$, $\bcan_\ptl(\B)$ is Hausdorff.
 \end{lemma}
 
 \begin{proof}  Fix a flat structure $\B\in\V$ and choose any forcing extension $\V[G]\supseteq\V$ such that $\B\in HC^{\V[G]}$.
By Proposition~\ref{reverse}, choose  $(M,\F)\in HC^{\V[G]}$ such that $(M,\F)^\flat\cong\B$.  Working in $\V[G]$, put $\B':=\can(M)$,
which is equal to $\bcan(\B)$.  By the strong definability of $\bcan$ and  since $\B\in\V$, $\B'\in\V$ as well.  We know that  $\B'$ is Hausdorff in $\V[G]$
by Lemma~\ref{canfacts}(1), but we must show  $\B'$ is Hausdorff in $\V$.  For this, choose any $\aa\neq\bb$ in $\B'$.
We will show that  $\flattp_{\B'}(\aa)$ and  $\flattp_{\B'}(\bb)$ differ on some formula in $\V$.    This is obvious unless both $\aa,\bb$ are in $U_k^{\B'}$ for the same $k$,
so we assume they are.  

Working in $\V[G]$, by the definition of $\can(M)$, $\aa=\css(M,\abar)$ and $\bb=\css(M,\bbar)$ for some tuples $\abar,\bbar\in M^k$, i.e., are $L(\cbar_k)_{\infty,\omega}$-sentences
$\sigma,\tau$, respectively.  As $\sigma$ and $\tau$ are distinct canonical Scott sentences,  they are explicitly contradictory.  
Thus,
the $L_{\infty,\omega}$-sentence $\forall\xbar (\hat{\sigma}(\xbar)\rightarrow\neg\hat{\tau}(\xbar))$ is persistently valid, so holds in $M$.
Now, as $\B'=\can(M)$,
$$\B'\models\hat{\sigma}^\flat(\aa)\wedge\hat{\tau}^\flat(\bb)$$
Since $M\models \forall\xbar (\hat{\sigma}(\xbar)\rightarrow\neg\hat{\tau}(\xbar))$, 
$\B'\cong \Mflat$ and Lemma~\ref{27} imply 
$$\B'\models\forall z[\hat{\sigma}^\flat(z)\rightarrow \neg \hat{\tau}^\flat(z)]$$
Thus, in $\V[G]$, 
$\B'\models\neg\hat{\tau}^\flat(\aa)\wedge\hat{\tau}^\flat(\bb)$. 
Since $\B'\in\V$ and $\aa,\bb\in\B'$, both $\sigma,\tau\in\V$, as is
 the syntactic variant $\hat{\tau}^\flat(z)\in\V$.
As satisfaction is absolute and $\B'\in \V$, we conclude that $\B'\models\neg\hat{\tau}^\flat(\aa)\wedge\hat{\tau}^\flat(\bb)$  in $\V$ as well.
Thus,  the formula $\hat{\tau}^\flat(z)$ witnesses that $\flattp_{\B'}(\aa)\neq \flattp_{\B'}(\bb)$ in $\V$.
\hfill\qedsymbol\end{proof}


\begin{lemma}  The notion of `$\B$ is Hausdorff' is absolute between forcing extensions.  Thus, $\Hdf_\ptl$ denotes the class of Hausdorff
flat structures.
\end{lemma}

\begin{proof}  Suppose $\V[G]\supseteq\V$ is any forcing extension and assume $\B\in \V$.  
That $\B$ Hausdorff in $\V$ implies $\B$ Hausdorff in $\V[G]$ is easy, since for different elements $\aa\neq \bb$
of $\B$ if $\B\models\phi^\flat(\aa)\wedge\neg\phi^\flat(\bb)$ in $\V$, then, as $\phi^\flat\in\V$,  the same holds in $\V[G]$.

For the converse, assume $\B\in\V$ is Hausdorff in $\V[G]$.  As $\B\in\V$, both $\bcan_\ptl(\B)$ and
$j:=\cmap_\ptl(\B)$ are in $\V$ by Lemma~\ref{ptl}.  Choose a forcing extension
$\V[G][H]$ in which $\B\in HC^{\V[G][H]}$.  Note that by the first paragraph, $\B$ is Hausdorff in $\V[G][H]$ as well.  
By definition of $\cmap_\ptl$, $j:\B\rightarrow\bcan_\ptl(\B)$ is equal to $\cmap(\B)$ in $\V[G][H]$.
Thus, working in $\V[G][H]$, $j$ is an $\Lflat$-isomorphism by Lemma~\ref{charHdf}.  

Now we work in $\V$.  Since $j\in\V$ and `being an isomorphism' is absolute, $\B\cong \bcan_\ptl(\B)$ in $\V$.
As $\bcan_\ptl(\B)$ is Hausdorff by Lemma~\ref{canHDF}, so is $\B$.
\hfill\qedsymbol\end{proof}

Given this absoluteness, we can now recast our definitions as follows.

\begin{definition}   Let $\Hdf$ consist of all Hausdorff flat structures $\B\in\V$ and let 
$\Hdf^*$ consist of all flat structures $\B\in\V$ such that $\bcan_\ptl(\B)=\B$.

 \end{definition}  
 
 The above agrees with our previous definitions in the sense that for any $\B\in \Flat(L)$, $\B\in\Hdf$ (resp.\ $\B\in\Hdf^*$) 
if and only if 
 $(\B\in \Hdf_{HC})^{\V[G]}$ (resp.\  $(\B\in \Hdf^*_{HC})^{\V[G]}$)
 for some/every forcing extension $\V[G]\supseteq\V$ with $\B\in HC^{\V[G]}$.
That is, $\Hdf=\Hdf_\ptl$ and $\Hdf^*=\Hdf^*_\ptl$.

\begin{lemma}  \label{flatequiv} The following are equivalent for Hausdorff flat structures $\B_1,\B_2$ (of any size):
\begin{enumerate}
\item  $\B_1\equiv_{\infty,\omega} \B_2$;
\item  $\B_1\cong\B_2$;
\item  There is a unique $\Lflat$-isomorphism $h:\B_1\rightarrow\B_2$;
\item  $\bcan_\ptl(\B_1)=\bcan_\ptl(\B_2)$.

\end{enumerate}
\end{lemma}

\begin{proof}   As $(3)\Rightarrow(2)\Rightarrow(1)$ is obvious, it suffices to prove $(1)\Rightarrow(4)$ and $(4)\Rightarrow(3)$.
For both parts, choose a forcing extension $\V[G]\supseteq\V$ in which both $\B_1,\B_2$ are in $HC^{\V[G]}$.

$(1)\Rightarrow(4)$:  Assume $\B_1\equiv_{\infty,\omega} \B_2$.  Then, classically, $\B_1\cong\B_2$ in $\V[G]$.  
Thus, by Lemma~\ref{canfacts}(2), $\bcan(\B_1)=\bcan(\B_2)$ in $\V[G]$.  
But,  by definition of $\bcan_\ptl$, we have $\bcan_\ptl(\B_\ell)=\bcan(\B_\ell)^{\V[G]}$ for $\ell=1,2$, hence
$\bcan_\ptl(\B_1)=\bcan_\ptl(\B_2)$.

$(4)\Rightarrow(3)$:  By definition of $\bcan_\ptl$, we have $\bcan_\ptl(\B_\ell)=\bcan(\B_\ell)^{\V[G]}$ for $\ell=1,2$, hence
$\bcan(\B_1)=\bcan(\B_2)$ in $\V[G]$.  Thus, by Lemma~\ref{canfacts}(2), there is some $\Lflat$-isomorphism $h:\B_1\rightarrow\B_2$ in $\V[G]$.  
We need to show that $h\in\V$, and moreover, there cannot be any other $\Lflat$-isomorphism.  For both of these, note that since both $\B_1$ and $\B_2$
are Hausdorff, distinct elements have distinct $\flattp$'s.  Using $h$, we see that  in $\V$, the sets $\{\flattp(\aa):\aa\in\B_1\}$ and $\{\flattp(\bb):\bb\in\B_2\}$
are equal.  Let $h_0\in \V$ be the unique bijection $h_0:\B_1\rightarrow\B_2$ that is $\flattp$-preserving, i.e., $\flattp(h_0(\aa))=\flattp(\aa)$ for all $\aa\in\B_1$.
As the uniqueness of $h_0$ is persistent in forcing extensions, it follows that $h_0=h$.  Thus, $h\in\V$ and is the unique isomorphism between $\B_1$ and $\B_2$.
\hfill\qedsymbol\end{proof}

From this, we get the following useful corollary.

\begin{Corollary}  \label{relateflats}
 Let $M$ be any $L$-structure and let $\Mflat$ be the canonical flattening of $M$.  Then for any $L$-structure $N$,
$$M\equiv_{\infty,\omega} N\quad\hbox{if and only if} \quad N^\flat\cong M^\flat$$
\end{Corollary}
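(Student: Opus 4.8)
The plan is to derive Corollary~\ref{relateflats} directly from Lemma~\ref{flatequiv} together with the results of Section~\ref{Hdfstuff} on the map $\bcan_\ptl$. The key observation is that by the lemma preceding Definition~\ref{flatformulas} (which says satisfaction of $\phi$ at $\abar$ in $M$ matches satisfaction of $\phi^\flat$ at $\abar/E_n$ in the flattening), the canonical flattening $\Mflat = (M,\F_\infty)^\flat$ is always Hausdorff, and similarly $N^\flat = (N,\F_\infty)^\flat$ is Hausdorff. So both $\Mflat$ and $N^\flat$ lie in $\Hdf$, and Lemma~\ref{flatequiv} applies to the pair $(\Mflat, N^\flat)$: they are $\Lflat$-isomorphic if and only if $\bcan_\ptl(\Mflat) = \bcan_\ptl(N^\flat)$.

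First I would record that for any $L$-structure $P$, $\bcan_\ptl(P^\flat) = \can_\ptl(P)$. Indeed, passing to a forcing extension $\V[G]$ with $P \in HC^{\V[G]}$, the structure $P^\flat = (P,\F_\infty)^\flat$ is itself a flat structure witnessing the hypothesis of $\bcan$ via the model $P$ (recall $\bcan(\B) = \can(M)$ where $(M,\F)^\flat \cong \B$, and here we may take $M = P$, $\F = \F_\infty$); hence $\bcan(P^\flat)^{\V[G]} = \can(P)^{\V[G]}$, and taking potentials gives $\bcan_\ptl(P^\flat) = \can_\ptl(P)$. Applying this with $P = M$ and $P = N$ reduces the right-hand side of the corollary to the statement $\can_\ptl(M) = \can_\ptl(N)$.

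It then remains to show that $M \equiv_{\infty,\omega} N$ if and only if $\can_\ptl(M) = \can_\ptl(N)$. For this, move to a forcing extension $\V[G]$ in which both $M$ and $N$ are hereditarily countable. By the definition of $\can_\ptl$, the equality $\can_\ptl(M) = \can_\ptl(N)$ holds in $\V$ if and only if $\can(M)^{\V[G]} = \can(N)^{\V[G]}$, which by Lemma~\ref{samemodel} (applied in $\V[G]$) is equivalent to $M \cong N$ in $\V[G]$. But for structures that are countable in $\V[G]$, isomorphism in $\V[G]$ is exactly back-and-forth equivalence, i.e.\ $M \equiv_{\infty,\omega} N$; and $\equiv_{\infty,\omega}$ is absolute between forcing extensions (it asserts the existence of a back-and-forth system, a $\Sigma_1$-over-$HC$ condition, as discussed after Definition~\ref{HCdef}), so this is equivalent to $M \equiv_{\infty,\omega} N$ in $\V$. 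Chaining these equivalences completes the proof.

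The main obstacle is bookkeeping the forcing-extension arguments carefully: one must check that the equality $\bcan_\ptl(P^\flat) = \can_\ptl(P)$ genuinely follows from the definition of $\bcan$ (the subtlety being that the witnessing model $M$ in the definition of $\bcan(\B)$ need not be $P$ itself, but $P$ is one legitimate choice, and $\can_\ptl$ depends only on the isomorphism type by Lemma~\ref{samemodel}), and that the Hausdorffness of $\Mflat$ and $N^\flat$ — needed to invoke Lemma~\ref{flatequiv} — is exactly the content of the lemma stating that the flattening of the canonical sharp expansion is Hausdorff. Once these two inputs are in hand, the rest is a direct chain of equivalences through Lemmas \ref{samemodel}, \ref{flatequiv}, and the absoluteness of $\equiv_{\infty,\omega}$.
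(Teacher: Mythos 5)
Your proof is correct, but it handles the two directions differently from the paper, most notably the converse. For the forward direction the paper also passes to a forcing extension $\V[G]$ where $M,N$ become countable and isomorphic, concludes $\Mflat\cong N^\flat$ there, and then descends to $\V$ using Hausdorffness via Lemma~\ref{flatequiv}; your version of this is essentially the same argument rerouted through clause (4) of Lemma~\ref{flatequiv} (equality of $\bcan_\ptl$-values) rather than clause (1). For the converse, however, the paper works entirely in $\V$: given the (unique) isomorphism $h:N^\flat\rightarrow M^\flat$, it defines $\F=\{(\abar,\bbar): h(\pi(\abar))=\pi(\bbar)\}$ and checks via Lemma~\ref{stepup} that this is a back-and-forth system between $N$ and $M$, thereby directly exhibiting a witness to $M\equiv_{\infty,\omega}N$. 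You instead push the isomorphism up to $\V[G]$, use Lemma~\ref{samemodel} to get $M\cong N$ there, and pull back by absoluteness of $\equiv_{\infty,\omega}$. Both routes are sound. The paper's converse is more constructive and avoids forcing for that direction; yours buys uniformity (a single chain of equivalences through $\can_\ptl$ and $\bcan_\ptl$) at the cost of having to verify the auxiliary identity $\bcan_\ptl(P^\flat)=\can_\ptl(P)$ — which does hold, by the uniqueness clause of Proposition~\ref{reverse} together with Lemma~\ref{samemodel}, plus the (easy but worth stating) observation that $P^\flat$ computed in $\V$ coincides with $P^\flat$ computed in $\V[G]$ because $\F_\infty$ is absolute. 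That identity is a genuinely useful byproduct of your approach that the paper does not record explicitly.
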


\begin{proof}  Say $M\equiv_{\infty,\omega} N$.  Pass to any forcing extension $\V[G]$ in which both $M$ and $N$ are countable, hence $M\cong N$ in $\V[G]$.
As back-and-forth equivalence is absolute,  $\Mflat\cong N^\flat$ in $\V[G]$.  Pulling back, we have $\Mflat\equiv_{\infty,\omega} N^\flat$ in $\V$.  But, as both $M^\flat$ and $N^\flat$ are Hausdorff,
there is a unique isomorphism between them by Lemma~\ref{flatequiv}.

Conversely, suppose $N^\flat\cong M^\flat$, say via the map $h$, which is unique by Lemma~\ref{flatequiv}. Let $\F=\{(\abar,\bbar)\in N^{<\omega}\times M^{<\omega}: h(\pi(\abar))=\pi(\bbar)\}$,
and it suffices to show that $\F$ is a back-and-forth system between $N$ and $M$.  But this follows easily from Lemma~\ref{stepup}.
\hfill\qedsymbol\end{proof}

\begin{lemma}  \label{oneside}  There is a class bijection $k:\CSS(L)_\ptl\rightarrow \Hdf^*$ given by
$k(\phi)=\can(M)$, where $M\models\phi$, $M\in HC^{\V[G]}$ for some forcing extension $\V[G]$ of $\V$ for which $\phi\in HC^{\V[G]}$.
\end{lemma}

\begin{proof}  We first show that the functional $k$ is well-defined and that $k(\phi)\in\V$.  For both, given $\phi\in\CSS(L)_\ptl$,
say $\V[G]$ and $\V[H]$ are each forcing extensions of $\V$ for which $\phi\in HC^{\V[G]}\cap HC^{\V[H]}$.  Choose countable models
$M\models\phi$ in $\V[G]$ and $N\models\phi$ in $\V[H]$.  Then both $M,N$ are elements of the product forcing $\V[G\times H]$.  
Working in $\V[G\times H]$, since $\phi$ is a canonical Scott sentence,
$M\cong N$, hence $\can(M)=\can(N)$ by Lemma~\ref{samemodel}.  Let $\B$ denote this common value.  Clearly, $\can^\flat(\B)=\B$, so $\B\in \Hdf^*$ (in $\V[G\times H]$).  
Now, by e.g., Lemma~2.5 of \cite{URL}, it follows that $\B\in\V$, hence $\B\in \Hdf^*$ (in $\V$) by the absoluteness of $\Hdf^*$.  So put $k(\phi):=\B$.

To see that $k$ is bijective,  we describe its inverse.  
Choose any $\B\in \Hdf^*$.  As $\bcan_{\ptl}(\B)=\B$, $U_0^{\B}=\{\phi\}$, where $\phi=\css(M)$ for some/every $M\in HC_{\V[G]}$ for some/every forcing
extension $\V[G]$ in which $\B\in HC^{\V[G]}$.  Then $k(\phi)=\B$.
\end{proof}

%

The following summarizes the key relationships.  The map between (1) and (2) is given by Lemma~\ref{flatequiv} and the map between (3) and (2) is given by Lemma~\ref{oneside}.

\begin{Conclusion}  \label{Hdfconc}  There are natural bijections between three classes:
\begin{enumerate}
\item  $(\Hdf,\cong)$, the class of isomorphism types of Hausdorff flat structures;
\item  The class $\Hdf^*$; and
\item  The class $\CSS(L)_\ptl$ discussed in \cite{URL}.
\end{enumerate}
\end{Conclusion}  

By contrast, the subclass of $\Hdf^*$ consisting of all  $\can_\ptl(M)$ for $M\in\Mod(L)$ is less interesting.  In the notation of \cite{URL}, it is bijective with
$\CSS(L)_{\rm sat}$.

\subsection{Relativizing to $\Mod(\Phi)$}  In the whole of the previous subsection, we were working with the class of all $L$-structures and all flat $\Lflat$-structures.
However, for applications we want to restrict to e.g., models of a particular sentence of $L_{\omega_1,\omega}$.
For a fixed $\Phi\in L_{\omega_1,\omega}$, let $\Mod(\Phi)$  and $\Flat(\Phi)$ denote the class of models of $\Phi$ and flat models of $\Phi^\flat$, respectively.
Similarly, $\Hdf(\Phi)$ denotes the class of Hausdorff flat structures that model $\Phi^\flat$ and $\Hdf^*(\Phi)$ is the image of $\Flat(\Phi)$ under
$\bcan_\ptl$.  
Owing to the fact that satisfaction is absolute, all of the results in the previous subsection go through, leading to the following  relativization of Conclusion~\ref{Hdfconc}.

\begin{theorem}  \label{restrict}  Let $\Phi\in L_{\omega_1,\omega}$.  There are natural bijections between three classes:
\begin{enumerate}
\item  $(\Hdf(\Phi),\cong)$, the class of isomorphism types of Hausdorff flat models of $\Phi$;
\item  The class $\Hdf^*(\Phi)$; and
\item  The class $\CSS(\Phi)_\ptl$ discussed in \cite{URL}.
\end{enumerate}
\end{theorem}  

In \cite{URL},  a sentence $\Phi\in L_{\omega_1,\omega}$ is {\em grounded} if $\CSS(\Phi)_{\rm sat}=\CSS(\Phi)_\ptl$.
Here, we obtain a more malleable equivalence.  Recall that by Proposition~\ref{reverse}, every {\em countable} Hausdorff flat structure
is isomorphic to $M^\flat$ for some $M\in\Mod(\Phi)$.

\begin{Corollary}  A sentence $\Phi\in L_{\omega_1,\omega}$ is grounded if and only if every Hausdorff flat structure (of any size) is isomorphic to
$M^\flat$ for some $M\in\Mod(\Phi)$.
\end{Corollary}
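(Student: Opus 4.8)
The plan is to reduce the Corollary to the bijection of Theorem~\ref{restrict} together with the remark recorded (for $L$) right after the Conclusion, which relativizes to $\Phi$ without change since satisfaction is absolute: under the natural bijection $\Hdf^*(\Phi)\leftrightarrow CSS(\Phi)_\ptl$, the subclass $\{\can_\ptl(M):M\in\Mod(\Phi)\}$ corresponds exactly to $CSS(\Phi)_{\rm sat}$ (indeed $\can_\ptl(M)\mapsto\css(M)$, the unique element of $U_0(\can_\ptl(M))$). Since this bijection restricts to a bijection between those two subclasses, and groundedness of $\Phi$ means precisely $CSS(\Phi)_{\rm sat}=CSS(\Phi)_\ptl$, we get
$$\Phi\ \hbox{is grounded}\quad\Longleftrightarrow\quad\{\can_\ptl(M):M\in\Mod(\Phi)\}=\Hdf^*(\Phi).$$
So the task becomes: show the right-hand equality is equivalent to the statement that every Hausdorff flat structure modelling $\Phi^\flat$ is isomorphic to $M^\flat$ for some $M\in\Mod(\Phi)$ (here ``Hausdorff flat structure'' must mean ``Hausdorff flat model of $\Phi^\flat$'', i.e.\ member of $\Hdf(\Phi)$, since $\B\cong M^\flat$ with $M\models\Phi$ already forces $\B\models\Phi^\flat$). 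Two routine facts will be used throughout: (i) for every $L$-structure $M$, $\can_\ptl(M)\cong M^\flat$ and $\can_\ptl(M)\in\Hdf^*=\Fix(\bcan_\ptl)$; (ii) $\Hdf^*(\Phi)\subseteq\Hdf(\Phi)$ --- if $\C=\bcan_\ptl(\C')$ with $\C'\in\Flat(\Phi)$, then $\C$ is Hausdorff by Lemma~\ref{canHDF}, and writing $(N,\F)^\flat\cong\C'$ as in Proposition~\ref{reverse} (in a forcing extension), the Lemma relating $\phi$ with $\phi^\flat$ gives $N\models\Phi$, so $\C$ is isomorphic to $N^\flat\models\Phi^\flat$.

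For the forward implication, assume $\{\can_\ptl(M):M\in\Mod(\Phi)\}=\Hdf^*(\Phi)$ and fix $\B\in\Hdf(\Phi)$. Then $\B\in\Flat(\Phi)$, so $\bcan_\ptl(\B)\in\Hdf^*(\Phi)$; and as $\B$ and $\bcan_\ptl(\B)$ are Hausdorff (Lemma~\ref{canHDF}) with equal $\bcan_\ptl$-value (idempotence, Lemma~\ref{onto}), Lemma~\ref{flatequiv} gives $\B\cong\bcan_\ptl(\B)$. By hypothesis $\bcan_\ptl(\B)=\can_\ptl(M)$ for some $M\in\Mod(\Phi)$, whence $\B\cong\bcan_\ptl(\B)=\can_\ptl(M)\cong M^\flat$ by (i). For the converse, assume every member of $\Hdf(\Phi)$ is isomorphic to some $M^\flat$ with $M\in\Mod(\Phi)$. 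The inclusion $\{\can_\ptl(M):M\in\Mod(\Phi)\}\subseteq\Hdf^*(\Phi)$ holds since $M^\flat\in\Flat(\Phi)$ and $\bcan_\ptl(M^\flat)=\bcan_\ptl(\can_\ptl(M))=\can_\ptl(M)$ by (i), Lemma~\ref{flatequiv} and Lemma~\ref{onto}. For the reverse inclusion, take $\C\in\Hdf^*(\Phi)$; by (ii), $\C\in\Hdf(\Phi)$, so $\C\cong M^\flat\cong\can_\ptl(M)$ for some $M\in\Mod(\Phi)$. Since $\C$ and $\can_\ptl(M)$ are isomorphic and both fixed by $\bcan_\ptl$, Lemma~\ref{flatequiv} forces $\C=\bcan_\ptl(\C)=\bcan_\ptl(\can_\ptl(M))=\can_\ptl(M)$, so $\C\in\{\can_\ptl(M):M\in\Mod(\Phi)\}$. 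This completes the equivalence.

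I do not expect a serious obstacle; the argument is bookkeeping once Lemma~\ref{flatequiv}, Lemma~\ref{canHDF}, Lemma~\ref{onto} and Proposition~\ref{reverse} are in play. The one step deserving care is the initial reduction: pinning down that $CSS(\Phi)_{\rm sat}$ is the image of $\{\can_\ptl(M):M\in\Mod(\Phi)\}$ (and of no larger subclass) under the bijection of Theorem~\ref{restrict} --- this is exactly the relativized form of the remark following the Conclusion. A secondary point is the convention that $M^\flat$ denotes the \emph{canonical} flattening $(M,\F_\infty)^\flat$, so that $M^\flat$ is automatically Hausdorff and $M^\flat\cong\can_\ptl(M)$; with this fixed, all the isomorphisms above make sense. (A more hands-on proof of the forward direction avoiding the Scott-sentence bijection is also possible: replace $\B\in\Hdf(\Phi)$ by $\bcan_\ptl(\B)\in\Hdf^*(\Phi)$, extract from a forcing extension a countable $M^*\models\Phi$ with $(M^*)^\flat\cong\bcan_\ptl(\B)$ via Proposition~\ref{reverse}, observe that $\css(M^*)\in CSS(\Phi)_\ptl$ actually lies in $\V$, use groundedness to produce $N\in\Mod(\Phi)$ with $\css(N)=\css(M^*)$, and conclude $N^\flat\cong\B$ via Corollary~\ref{relateflats}.)
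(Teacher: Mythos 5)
Your proof is correct, and it follows exactly the route the paper intends (the paper states this Corollary without proof, immediately after Theorem~\ref{restrict} and the remark identifying $\{\can_\ptl(M):M\in\Mod(L)\}$ with $CSS(L)_{\rm sat}$): translate groundedness into the equality $\{\can_\ptl(M):M\in\Mod(\Phi)\}=\Hdf^*(\Phi)$ and then unwind it using Lemmas~\ref{canHDF}, \ref{onto}, \ref{flatequiv} and Proposition~\ref{reverse}. Your observation that ``Hausdorff flat structure'' must be read as ``Hausdorff flat model of $\Phi^\flat$'' is the right reading of the statement, and the bookkeeping with $\bcan_\ptl$ is all sound.
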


\section{Flat structures, sharp expansions of a countable $M$, and $\Aut(M)$}  \label{precise}
Let $M$ be a countable $L$-structure with universe $\omega$ and let $\phi_M\in L_{\omega_1\omega}$ be any Scott sentence for $M$.
Among all flattenings $(M,\F)^\flat$, we know  there is a unique (up to unique isomorphism)
countable, Hausdorff flat structure $\B$, namely the canonical flattening $\Mflat$ of $M$.  As $M\models \phi_M$, it follows that $\B\models \phi_M^\flat\wedge\Ax$.
We will see that the other countable models of $\phi_M^\flat\wedge\Ax$  correspond to closed subgroups of $\Aut(M)$ and we analyze the complexity of this correspondence.

In what follows, for $\Phi\in L_{\omega_1,\omega}$, $\Mod_\omega(\Phi)$ denotes the Polish space of models of $\Phi$ with universe $\omega$.
Note that $(\Mod_\omega(\phi_M\wedge\Axsharp),\cong_\Lsharp)$ is HC-bireducible to $\{$sharp expansions $(M,\F)\}/\cong_{\Lsharp}$.

From Proposition~\ref{reverse}  there
is an HC-bireduction between $\Mod_\omega(\phi_M^\flat\wedge\Ax)/\cong_{\Lflat}$ and $\{$sharp expansions $(M,\F)\}/\cong_{\Lsharp}$, so to obtain a correspondence between
$\Mod_\omega(\phi_M^\flat\wedge\Ax)/\cong_{\Lflat}$ and closed subgroups of $\Aut(M)$,
it 
would suffice to find an HC-invariant bijection between sharp expansions $(M,\F)$ of $M$ and closed subgroups 
of $\Aut(M)$.  However, such a mapping cannot exist, for the simple reason that closed subgroups of $\Aut(M)$ are rarely elements of $HC$.  Thus, we instead derive a correspondence between sharp expansions and {\em codes} of closed subgroups of $\Aut(M)$.  

\subsection{Codes of closed subgroups}
We begin by defining the codes.  It is natural to work with the topological group $S_\infty=\Sym(\omega)$, which we can view as $\Aut(\E)$, where $\E:=(\omega,=)$.

\begin{definition}  For each $n\in\omega$, let $I_n=\omega^n\times \omega^n$ and let $I=\bigcup\{I_n:n\in\omega\}$.  Applying  any $\sigma\in S_\infty$ coordinatewise
induces a natural map$:I\rightarrow I$ (also called $\sigma$).
For each $(\abar,\bbar)\in I$, let
$U_{\abar,\bbar}=\{\sigma\in S_\infty:\sigma(\abar)=\bbar\}$.

\end{definition}

Then $\{U_{\abar,\bbar}:(\abar,\bbar)\in I\}$ is the standard basis for the topology of pointwise convergence on $S_\infty$.  
With respect to this topology, for any closed $C\subseteq S_\infty$, let
$$\F(C)=\{(\abar,\bbar)\in I: C\cap U_{\abar,\bbar}\neq\emptyset\}$$
denote the {\em code of $C$}.  Note that the mapping $C\mapsto\F(C)$ is injective on closed sets and $\F(C)\in HC$ for every $C\subseteq S_\infty$.
Moreover, recalling Definitions~\ref{bf}~and~\ref{sharp}, $\F(C)$ is a downward closed back and forth system on $\E$ for every closed $C\subseteq S_\infty$.

We endow $\P(I)$ with the usual Cantor topology, i.e.,  sets  of the form  $V_{\abar,\bbar}:=\{A\subseteq I:(\abar,\bbar)\in A\}$ and their complements form a sub-basis for the topology.
As $I$ is countable, this topology is Polish.

Now suppose $\F\subseteq I$ is a downward closed back and forth system.  Construing $\sigma\in S_\infty$ as $\sigma:I\rightarrow I$, the inverse operation is given by
$$C_{\F}=\{\sigma\in S_\infty: \sigma\subseteq\F\}$$

The proof of the following lemma is routine, once one notes that for  $\F$ any downward closed back and forth system, if $(\abar,\bbar)\in\F$ then there is some
$\sigma\in S_\infty$ with $\sigma\subseteq \F$ and $\sigma(\abar)=\bbar$.

\begin{lemma}  \label{verify}
The set $\{\F(C):\emptyset\neq C\subseteq S_\infty$ closed$\}$ is equal to the set of downward closed back and forth systems on $\E$.  Specifically, 
$C_{\F(C)}=C$ for every non-empty closed $C\subseteq S_\infty$, and
$\F(C_{\F})=\F$ for every downward closed back and forth system $\F$ on $\E$.
\end{lemma}


It is easily seen from Lemma~\ref{verify} that the subspace $\{\F(C):C$ closed in $S_\infty\}\subseteq\P(I)$ is closed, hence is also a Polish space.
The correspondence given in Lemma~\ref{verify} explains our interest in Definitions~\ref{bf} and \ref{sharp}. 
Continuing, we are not so much interested in arbitrary closed sets $C$, but rather 
closed subgroups $H\le \Aut(\E)$.  In the proof of Lemma~\ref{subgroup}, 
the three clauses of being a subgroup exactly align with the reflexivity, symmetry, and transitivity clauses of being an equivalence relation.

\begin{lemma}  \label{subgroup}  A closed subset $H\subseteq \Aut(\E)$ is a closed subgroup if and only if $\F(H)$ is a sharp back and forth system on the structure $\E$.
\end{lemma}

Relativizing Lemma~\ref{subgroup} to closed subgroups of $\Aut(M)$ for a countable structure $M$  yields  the following fundamental result.

\begin{proposition} \label{link} Let $M\in HC$ be any structure.  The map $(M,\F)\mapsto \F$ is a  persistent, strongly definable bijection between
$\{$sharp expansions of $M\}$ and $\{$codes of closed subgroups of $\Aut(M)\}$.
\end{proposition}

\subsection{Conjugacy and sharp expansions}

Fix an $L$-structure $M$ with universe $\omega$.  Classically, any expansion $M^*$ of $M$ can be `seen' by looking at $\Aut(M^*)$, which is a closed subgroup of $\Aut(M)$.
Conversely, any closed subgroup $H\le \Aut(M)$ gives rise to an expansion $M_H$ by letting $L^*=L\cup\{P^{\abar}(\xbar):\abar\in M^{<\omega}\}$ and letting $M^*$ be the expansion of $M$ formed by interpreting each $P^{\abar}$ as the $H$-orbit of $\abar$ in $M^{\lg(\abar)}$.  In such an expansion we have $\Aut(M_H)=H$.

Proposition~\ref{link} gives a different correspondence between sharp expansions and closed subgroups.
Given a sharp expansion $(M,\F)$ of $M$, the relevant closed subgroup is
$$\Fix(M,\F)=\{\sigma\in \Aut(M):(M,\F)\models E_k(\abar,\sigma(\abar))\ \hbox{for all $k$, $\abar\in M^k$}\}$$
which is a closed subgroup of $\Aut(M,\F)$, but is typically not equal to it.  The relevance of $\Fix(M,\F)$ is largely explained by the following easy lemmas.

\begin{lemma}  \label{tech}  Let $M$ be any  $L$-structure with universe $\omega$ and let $\F$ be any sharp back-and-forth system on $M$.  
Then, for any $(\abar,\bbar)\in \F$ (i.e., $(M,\F)\models E_k(\abar,\bbar)$) there is some $\sigma\in \Fix(M,\F)$ such that
$\sigma(\abar)=\bbar$.
\end{lemma}  

\begin{proof} An appropriate $\sigma$ can be constructed by finite approximations using the back-and-forth property of $\F$.
\hfill\qedsymbol\end{proof}

\begin{lemma}  \label{Fix}  For any sharp back and forth system $\F$ on $M$, $\Fix(M,\F)=C_{\F}$ from Lemma~\ref{verify}.
\end{lemma}

\begin{proof}  Choose $\sigma\in \Fix(M,\F)$.  To see $\sigma\subseteq\F$, choose any $\abar\in M^k$.  Since $(M,\F)\models E_k(\abar,\sigma(\abar))$, we have
$(\abar,\sigma(\abar))\in\F$.  Conversely, choose $\sigma\in C_{\F}$.  To see that $\sigma\in \Aut(M)$, choose any $(\abar,\bbar)\in \sigma$.
Since $(\abar,\bbar)\in\F$, it follows from Lemma~\ref{tech} that there is an automorphism $\psi\in \Aut(M)$ with $(\abar,\bbar)\in \psi$.  As this holds for all $(\abar,\bbar)$ and since $\Aut(M)$ is
closed in $S_\infty$,
we conclude $\sigma\in \Aut(M)$.  To see $\sigma\in \Fix(M,\F)$,  choose any $\abar\in M^k$.  Since $(\abar,\sigma(\abar))\in \F$, we have $(M,\F)\models E_k(\abar,\sigma(\abar))$,
as required.
\hfill\qedsymbol\end{proof}

\begin{definition} \label{CM}  Given an $L$-structure $M$ with universe $\omega$, let 
$$\C_M=\{\F(H)\in\P(I):\hbox{$H$ a closed subgroup of $\Aut(M)\}$.}$$
Conjugacy induces an equivalence relation $\sim_{\conj}$ on $\C_M$ as:  $\F(H_2){\sim}_{\conj} \F(H_1)$ if and only if 
$H_2$ and $H_1$ are $\Aut(M)$-conjugate, i.e., $H_2=\delta H_1\delta^{-1}$ for some $\delta\in \Aut(M)$.

\end{definition}

In light of Lemma~\ref{verify}, $\C_M$ is the set of sharp back and forth systems on $M$.  
In terms of descriptive set theory, being a closed subspace of $\P(I)$, $\C_M$ is a Polish space.  Thus,  being an orbit equivalence relation of the action of a Polish group,
$\sim_{\conj}$ is an analytic equivalence relation on $\C_M$.
We summarize our correspondence with the following.  Recall the definitions of $\Axsharp$ and $\Ax$ from Definitions~\ref{sharpdef} and \ref{flatdef}, respectively.

\begin{proposition} \label{bigcompare} Let $M$ be any $L$-structure with universe $\omega$ and let $\phi_M\in L_{\omega_1,\omega}$ be any Scott sentence for $M$.
The following three strongly definable quotients are HC-bireducible.
\begin{enumerate}  
\item $(\Mod_\omega(\phi_M^\flat\wedge\Ax),\cong_{\Lflat})$;
\item  $(\Mod_\omega(\phi_M\wedge\Axsharp),\cong_{L^\sharp})$; and 
\item $(\C_M,\sim_\conj)$.
\end{enumerate}
\end{proposition}

\begin{proof}  The correspondence between (1) and (2) is given by the flattening operation and Proposition~\ref{reverse}.
To get the HC-bireduction between (2) and (3), we use the map $(M,\F)\mapsto \F$ from Proposition~\ref{link}.  We show this map preserves quotients in both directions, i.e.
$$(M,\F_1)\cong_{L^\sharp} (M,\F_2)\quad \hbox{if and only if} \quad \F_1\sim_\conj \F_2$$
To establish this,  choose sharp back and forth systems $\F_1,\F_2$ on $M$.  By Lemma~\ref{Fix}, $H_1:=\Fix(M,\F_1)$ and $H_2:=\Fix(M,\F_2)$ are the closed subgroups of $\Aut(M)$ coded by $\F_1$ and $\F_2$, respectively.   It is readily checked that for any $\delta\in\Aut(M)$, $H_2=\delta^{-1} H_1\delta$ if and only if $\F_2=\delta^{-1} \F_1\delta$ if and only if $\delta:(M,\F_1)\rightarrow (M,\F_2)$ is an $L^\sharp$-isomorphism. \hfill\qedsymbol\end{proof}

%

\subsection{Comparing families $\C_M$ and the completeness of $\C_{(\omega,=)}$}

Here we  compare the complexity of quotients of $\C_M$ and $\C_N$ by ${\sim}_{\conj}$, where $M$ and $N$ are each structures with universe $\omega$, although possibly in different languages.
 The following Lemma must be very standard.

\begin{lemma} \label{algebra}   Suppose $M$ and $N$ are structures (possibly in different countable languages), each with universe $\omega$, and choose
 a continuous, surjective homomorphism $f:\Aut(N)\rightarrow \Aut(M)$.   Then the map $H\mapsto f^{-1}(H)$ is an injection of closed subgroups that respects conjugacy.
Thus, the induced mapping  $h:(\C_M,\sim_\conj)\rightarrow (\C_N,\sim_\conj)$ of the codes given by $h(\F(H))=\F(f^{-1}(H))$ is an HC-reduction.
\end{lemma}

\begin{proof}  We first show that the map $H\mapsto f^{-1}(H)$  factors through conjugacy.  That is, suppose $H_1,H_2\in \C_M$ satisfy $H_2=\delta H_1 \delta^{-1}$ for some $\delta\in \Aut(M)$.  
As $f$ is onto, choose $\gamma\in \Aut(N)$ such that $f(\gamma)=\delta$.  We argue that $f^{-1}(H_2)=\gamma f^{-1}(H_1) \gamma^{-1}$.  
To see this, choose $\sigma\in f^{-1}(H_1)$.  It suffices to show $\gamma\sigma\gamma^{-1}\in f^{-1}(H_2)$, i.e., that $f(\gamma\sigma\gamma^{-1})\in H_2$.
But this is immediate, as $f$ is a homomorphism and $\delta f(\sigma) \delta^{-1}\in H_2$.  Thus, $h$ is well-defined.

To see that $h$ is injective on classes, suppose $G_1=f^{-1}(H_1)$ and $G_2=f^{-1}(H_2)$ in  $\C_N$ are conjugate, i.e., $G_2=\gamma G_1 \gamma^{-1}$ for some $\gamma\in \Aut(N)$.
As $f$ is a homomorphism, it follows that $H_2=f(\gamma) H_1 f(\gamma)^{-1}$, so $H_1$ and $H_2$ are conjugate in $\C_M$.

To show HC-invariance, choose a  closed $H\le \Aut(M)$  and   a forcing extension $\V[G]\supseteq\V$.  
Let $H'$ be the closed subgroup of $\Aut(M)^{\V[G]}$ with the same code as $H$.  Since they have the same codes, $H\subseteq H'$ is dense.
Because analytic statements are absolute,  we have  $f^{-1}(H)^{\V}$ is dense in $f^{-1}(H')^{\V[G]}$, so they have the same codes.
\hfill\qedsymbol\end{proof}

For this to be interesting, we need to know that $(S_\infty,\conj)$ is sufficiently complicated.  Recall that we can identify $S_\infty$ with $\Aut(\omega,=)$.

\begin{lemma}  \label{FSidea}  $(\Mod_\omega(\mbox{Graphs}),\cong)$ is  HC-reducible to $(\C_{(\omega,=)},\sim_\conj)$.
\end{lemma}

\begin{proof}  A graph is an $\{R\}$-structure, where $R$ is a symmetric, irreflexive relation.
Call an $\omega$-tree $(T,\trianglelefteq)$ {\em padded} if, for all $a\in T$ and $b\in \Succ(a)$, there is an infinite set $\{b_i:i\in\omega\}\subseteq \Succ(a)$
such that $T_{\ge b_i}\cong T_{\ge b}$ for every $i$.   An easy alteration\footnote{Specifically, adding multiplicities to account for the padding.}
 of the original Friedman-Stanley proof that $\{$Subtrees of $\omega^{<\omega}\}$ are Borel complete
(Theorem~1.1.1 of \cite{FS})
shows that there is a Borel reduction 
$$f:\ \mbox{Graphs}\rightarrow \ \hbox{Padded subtrees of $(\omega\times\omega)^{<\omega}$}$$
Recall $\E=(\omega,=)$, so $\Aut(\E)=S_\infty$ and $\C_{\E}$ denotes the codes of closed subgroups of $S_\infty$.
Let $g:\{\mbox{Graphs}\} \rightarrow \C_{\E}$ be the map $g(G):=\F(\Aut(f(G)))$.

We must show that if $G,H$ are graphs with universe $\omega$ and the groups $\Aut(f(G)), \Aut(f(H))$ are conjugate by some $\delta\in S_\infty$,
then $G$ and $H$ are isomorphic. [The converse is easy.] We verify this by way of the following Claim.

\medskip
\noindent{\bf Claim.}  If $(T,\trianglelefteq)$ is any padded $\omega$-tree, then for any $a,b\in T$, $a\trianglelefteq b$ if and only if every $\sigma\in \Aut(T,\trianglelefteq)$ that fixes $b$ also fixes $a$.
\medskip

\begin{proof}  Left to right is obvious, as for any $\omega$-tree $(T,\trianglelefteq)$, if $a\trianglelefteq b$, then $a\in\dcl(b)$.
For the converse, note that for any $c\in T$ and any $d,d'\in \Succ(c)$ with $T_{\ge d}\cong T_{\ge d'}$, there is $\sigma\in \Aut(T,\trianglelefteq)$ fixing 
$T\setminus (T_{\ge d}\cup T_{\ge d'})$
 with $\sigma(d)=d'$.
 
Now
assume that $a\not\trianglelefteq b$ and let $c=a\wedge b$.  There are  two cases.  First, if $c=b$ (i.e., if $b\trianglelefteq a$) then let $d\in \Succ(b)$
with $d\trianglelefteq a$.  As $(T,\trianglelefteq)$ is padded, choose $d'\in \Succ(b)$ with $d'\neq d$ but $T_{\ge d'}\cong T_{\ge d}$.  Then any 
$\sigma\in \Aut(T,\trianglelefteq)$ fixing $b$ with $\sigma(d)=d'$ will move $a$ as well.
Second, assume $c\neq b$, hence $c\triangleleft b$.  Let $d\in \Succ(c)$ with $d\trianglelefteq a$.  Choose $d'\in \Succ(c)$ such that $d'\neq d$ and 
$d'\not\trianglelefteq b$, but with $T_{\ge d'}\cong T_{\ge d}$.  Any automorphism $\sigma\in \Aut(T,\trianglelefteq)$  with $\sigma(d)=d'$
that fixes $T\setminus (T_{\ge d}\cup T_{\ge d'})$ will fix $b$, but move $a$.
\hfill\qedsymbol\end{proof}

To see that this Claim suffices, suppose $G,H$ are graphs with universe $\omega$ and $\Aut(f(H))=\delta \Aut(f(G)) \delta^{-1}$.  Because of the Borel reduction, it suffices to show that 
 $\delta:(f(G),\trianglelefteq)\cong (f(H),\trianglelefteq)$ is a tree isomorphism.  As $\delta\in S_\infty$ and by symmetry, it suffices to show that if $a,b\in f(G)$
 and $(f(G),\trianglelefteq)\models a\trianglelefteq b$, then $(f(H),\trianglelefteq)\models \delta(a)\trianglelefteq \delta(b)$.  But this is clear: Let $c=\delta(a)$, $d=\delta(b)$,
 and choose any $\tau\in \Aut(f(H),\trianglelefteq)$ with $\tau(d)=d$.  
Put $\sigma:=\delta^{-1}\tau\delta\in \Aut(f(G),\trianglelefteq)$.  Since $\tau(d)=d$, $\delta(\sigma(b))=\delta(b)$, hence $\sigma(b)=b$.
By the Claim, $\sigma(a)=a$ as well, hence $\tau(c)=c$.   Thus, $(f(H),\trianglelefteq)\models c\trianglelefteq d$ by the Claim.
\hfill\qedsymbol\end{proof}

\section{Characterizing Borel complete expansions}  \label{BCexpansions}

With the extensive preliminaries out of the way, we are now able to obtain our characterization of sentences of $L_{\omega_1,\omega}$ that have Borel complete expansions.
We begin with a classical result of Morley \cite{Morley} that characterizes which sentences of $L_{\omega_1,\omega}$ have Ehrenfeucht-Mostowski models.
The novelty will be that we apply this result to a class of flat structures.    The following fact is contained in Chapter~5 of \cite{Marker}.

\begin{Fact}  \label{Morley}  The following are equivalent for  $\psi\in L_{\omega_1,\omega}$.
\begin{enumerate}
\item  For all $\alpha<\omega_1$, $\psi$ has a model of size at least $\beth_\alpha$;
\item  $\psi$ has arbitrarily large models;
\item There is a countable $L^*\supseteq L$, a countable, Skolemized fragment $\F^*$ containing $\psi$, and a countable $L^*$-structure $N^*\models\psi$
that is the Skolem hull of a sequence $\<a_q:q\in\Q\>$ of $\F^*$-order indiscernibles; and
\item  For all linear orderings $(J,\le)$, there is a model $N_J\models \psi$ containing  a sequence $\<a_j:j\in J\>$ of order indiscernibles.
\end{enumerate}
\end{Fact}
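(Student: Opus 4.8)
This is Morley's theorem on the Hanf number of $L_{\omega_1,\omega}$ \cite{Morley}, and the plan is to reprove it along the classical lines. The implication $(2)\Rightarrow(1)$ is immediate, since ``arbitrarily large'' in particular yields a model of size $\ge\beth_\alpha$ for every $\alpha<\omega_1$. For $(3)\Rightarrow(4)$ I would run the standard Ehrenfeucht-Mostowski construction: from the countable Skolemized model $N^*$ one reads off the \emph{blueprint} $\Gamma$, namely the complete $\F^*$-type of an increasing tuple from the order-indiscernible sequence $\<a_q:q\in\Q\>$ (together with the Skolem axioms and the sentence $\psi$), and then for an arbitrary linear order $(J,\le)$ one forms $N_J:=\mathrm{EM}(J,\Gamma)$, the Skolem hull of a $J$-indexed sequence of $\F^*$-order indiscernibles realizing $\Gamma$. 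Because $\F^*$ is a Skolemized fragment closed under subformulas, the usual Tarski--Vaught argument applies to the substructure generated by those indiscernibles, so $N_J\models\psi$. Since $v_0\ne v_1$ lies in $\Gamma$ --- inherited from the $\Q$-indexed case, where the indiscernibles are genuinely distinct --- the $a_j$ are distinct, and $(4)\Rightarrow(2)$ follows by taking $(J,\le)$ of arbitrarily large cardinality, whence $|N_J|\ge|J|$.

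The substance is $(1)\Rightarrow(3)$, which I would carry out in three stages. \emph{Skolemization.} Fix a countable fragment $\F_0\subseteq L_{\omega_1,\omega}$ containing $\psi$ and closed under subformulas; adjoin countably many new function symbols witnessing the formulas $\exists x\,\theta(x,\bbar)$ of $\F_0$; iterate $\omega$ times; and let $L^*\supseteq L$, the countable Skolemized fragment $\F^*$, and its Skolem theory $\Sigma^*$ be the result. The point of $\Sigma^*$ is that any $L^*$-structure modelling it has, over every subset of its universe, a Skolem-hull substructure which is $\F^*$-elementary. Since every model of $\psi$ expands to a model of $\Sigma^*\cup\{\psi\}$, hypothesis $(1)$ gives models of $\Sigma^*\cup\{\psi\}$ of size $\ge\beth_\alpha$ for every $\alpha<\omega_1$.

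\emph{Building the blueprint.} Enumerate $\F^*=\{\theta_k:k\in\omega\}$. For each $k$ pick a model $N_k\models\Sigma^*\cup\{\psi\}$ whose size is a large enough finite level of the $\beth$-hierarchy (such $N_k$ exist by $(1)$, since all finite $\beth_j$ lie below $\beth_{\omega_1}$), fix a linear order of $N_k$, and apply the Erd\H{o}s--Rado theorem to the finite colouring that records, for an increasing tuple, the truth values of $\theta_0,\dots,\theta_{k-1}$ on its initial segments of the relevant lengths. This yields an infinite subset of $N_k$ order-indiscernible for $\theta_0,\dots,\theta_{k-1}$, hence a \emph{finite} consistent ``order-indiscernible condition'' $\Gamma_k$, realized by an increasing tuple in a model of $\psi$. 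Only countably many formulas are in play, so I would then extract a single coherent limit: a compactness argument in the Cantor space $2^{\F^*}$ (equivalently, K\"onig's lemma on the tree of finite order-indiscernible conditions, or a nonprincipal ultrafilter on $\omega$) produces a complete, consistent, fully order-indiscernible $\F^*$-EM-type $\Gamma(v_0,v_1,\dots)$ that contains $\Sigma^*\cup\{\psi\}$ together with the inequality $v_0\ne v_1$. \emph{Conclusion.} Put $N^*:=\mathrm{EM}(\Q,\Gamma)$, the Skolem hull of a $\Q$-indexed sequence of $\F^*$-order indiscernibles realizing $\Gamma$; it is countable, satisfies $\psi$, and witnesses $(3)$.

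I expect the main obstacle to be precisely the passage in the second stage from the finite conditions $\Gamma_k$ --- each of which is only known to be realized in \emph{some} sufficiently large model of $\psi$ --- to a single coherent blueprint $\Gamma$ realized, in all finite senses simultaneously, by one sequence of indiscernibles. This is the step that genuinely consumes hypothesis $(1)$, through the facts that every finite level of the $\beth$-hierarchy lies below $\beth_{\omega_1}$ and that the iterated Erd\H{o}s--Rado estimates only ever call on such finite levels. Everything downstream of $\Gamma$ --- the Ehrenfeucht-Mostowski construction and the verification that Skolem hulls of indiscernibles satisfy $\psi$ --- is routine once $\F^*$ has been arranged to be a countable Skolemized fragment; the one bookkeeping point to watch is keeping $v_0\ne v_1$ in the blueprint, so that the resulting models $N_J$ really do grow with $|J|$.
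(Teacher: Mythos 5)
The paper quotes this statement as a classical result of Morley and offers no proof, so the only question is whether your reconstruction is sound. The easy implications are fine: $(2)\Rightarrow(1)$ is trivial, and in $(3)\Rightarrow(4)\Rightarrow(2)$ the blueprint $\Gamma$ is the honest $\F^*$-EM-type of a sequence actually realized in a model of $\psi$, so $\Gamma$ decides every infinitary connective correctly and the induction through the term model $\mathrm{EM}(J,\Gamma)$ goes through. The genuine gap is in $(1)\Rightarrow(3)$, at the passage from the finite conditions $\Gamma_k$ to the limit blueprint. K\"onig's lemma (equivalently, compactness of $2^{\F^*}$) only gives you a complete truth assignment each of whose \emph{finite} restrictions is realized by an indiscernible sequence in some model of $\psi\wedge\Sigma^*$; in $L_{\omega_1,\omega}$ this does not make the limit realizable. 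If $\bigvee_{i<\omega}\tau_i$ is a formula of $\F^*$ occurring positively in $\psi$, each $\Gamma_k$ may be witnessed in a model satisfying a different disjunct $\tau_{i_k}$ with $i_k\to\infty$; the limit $\Gamma$ then assigns ``true'' to the disjunction and ``false'' to every disjunct, and the term model built from the atomic part of $\Gamma$ fails $\bigvee_i\tau_i$, hence can fail $\psi$. Your phrase ``complete, consistent, fully order-indiscernible EM-type'' names exactly the object that compactness cannot deliver here.

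The tell is your own closing remark that the Erd\H{o}s--Rado estimates ``only ever call on finite levels'' of the $\beth$-hierarchy: were the argument correct, it would show that any $\psi$ with models of size $\ge\beth_n$ for every $n<\omega$ has arbitrarily large models, i.e.\ that the Hanf number of $L_{\omega_1,\omega}$ is at most $\beth_\omega$. That is false: for each $n$ there is a sentence $\phi_n$ characterizing $\beth_n$ (a model of size $\beth_n$ but none larger), and $\psi:=\bigvee_n\phi_n$ has models of size $\ge\beth_j$ for every finite $j$ yet every model of it has size $\le\beth_n$ for some $n$; your construction applies verbatim to this $\psi$. The hypothesis ``for all $\alpha<\omega_1$'' must enter, and in Morley's proof it enters precisely at the disjunction step: one grades each finite condition by the set of $\alpha<\omega_1$ at which it is realized on a homogeneous set of size $\ge\beth_\alpha$ (nonemptiness at each level coming from Erd\H{o}s--Rado applied to a model of size roughly $\beth_{\alpha+m}$), and when a condition realized at every level contains $\bigvee_i\tau_i$, the regularity and uncountability of $\omega_1$ let you choose a single disjunct $\tau_{i^*}$ whose addition keeps the condition realized at every level. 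Replacing your K\"onig's-lemma step by this $\omega_1$-graded bookkeeping (equivalently, running the construction through a consistency property as in Keisler's book) repairs the argument; the surrounding outline --- Skolemizing inside a countable fragment, Erd\H{o}s--Rado for finitely many formulas at a time, term models over $\Q$ and over arbitrary $J$, and keeping $v_0\ne v_1$ in the blueprint --- is the right shape.
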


For brevity, we say that {\em $\psi$ admits Ehrenfeucht-Mostowski models} if any of the four conditions hold.
Next, we have a group theoretic notion.

\begin{definition}   We say that the topological group {\em $H$ divides the topological group $G$} if there is a closed subgroup $G'\le G$
and a continuous, surjective homomorphism \hbox{$h:G'\rightarrow H$.}

\end{definition}

Obviously, if $H\le G$ is closed, then $H$ divides $G$.
We note two useful examples of dividing among automorphism groups.

\begin{lemma}  \label{connectgroups}  
\begin{enumerate}
\item  For $N^*$ chosen as in Fact~\ref{Morley}(3), $\Aut(\Q,\le)$ divides $\Aut(N^*)$.
\item Let $(M,\F)$ be any countable sharp structure and let $\B=(M,\F)^\flat$ be its flattening.  Then $\Aut(\B)$
divides $\Aut(M)$.
\end{enumerate}
\end{lemma}

\begin{proof}  (1)  Given $N^*$ as in Fact~\ref{Morley}(3), let $G'\le \Aut(N^*)$ denote the setwise stabilizer of $\{a_q:q\in\Q\}$.
Then, as $\<a_q:q\in\Q\>$ are order indiscernible and $N^*=\dcl(\{a_q:q\in\Q\})$, $G'$ is topologically isomorphic to $\Aut(\Q,\le)$.

(2)  As $\Aut(M,\F)$ is a closed subgroup of $\Aut(M)$, it suffices to verify that the homomorphism $h:\Aut(M,\F)\rightarrow \Aut(\B)$ is a continuous surjection, where
$h$ is
defined by $h(\sigma)$ being the automorphism of $\B$ satisfying $h(\sigma)(\aa)=\pi_k(\sigma(\abar))$ for every $\aa\in U_k(\B)$ and some  $\abar\in \pi_k^{-1}(\aa)$.
Showing that $h$ is well defined amounts to verifying that $\pi_k(\sigma(\abar))=\pi_k(\sigma(\abar'))$ whenever $(\abar,\abar')\in\F$, which is immediate since 
$\sigma\in \Aut(M,\F)$.   That $h$ is continuous follows from the fact that for any $\aa\in\B$, $h^{-1}(\Fix(\aa))$ is equal to the set of $\sigma\in \Aut(M,\F)$ fixing the
$E_k$-class $[\abar]_k$.
That $h$ is onto follows from the final sentence of  Proposition~\ref{reverse}:  Any $\sigma\in\Aut(\B)$ lifts to an automorphism of $(M,\F)$.
\hfill\qedsymbol\end{proof}

It is easily verified that the relation of dividing is transitive.  What is more surprising is the following, which largely appears as Lemma~2.5 of \cite{Hjorth}.

\begin{lemma}  \label{QSsame} $S_\infty$ divides $\Aut(\Q,\le)$.   For  any topological group $G$, $S_\infty$ divides $G$ if and only if $\Aut(\Q,\le)$ divides $G$.
\end{lemma}  

\begin{proof}  To see that $S_\infty$ divides $\Aut(\Q,\le)$, choose a partition $\Q=\sqcup \{D_n:n\in\omega\}$ into dense sets and let $E\subseteq \Q^2$ be the equivalence
relation given by $E(a,b)$ iff $\bigwedge_{n\in\omega} D_n(a)\leftrightarrow D_n(b)$.  Let $G'=\Aut(\Q,\le,E)$, which is  a closed subgroup of $\Aut(\Q,\le)$.
But it is easily checked that the map $h:G'\rightarrow S_\infty$, where $h(\sigma)$ is the element of $S_\infty$ given by $h(\sigma)(n)=$ the (unique) $m$ such
that $\sigma(a)\in D_m$ for any/all $a\in D_n$ is a continuous group homorphism.  The fact that $h$ is surjective is by a standard back-and-forth argument.

As $\Aut(\Q,\le)$ is a closed subgroup of $S_\infty$, it trivially divides it, so the second sentence follows by the transitivity of dividing.
\hfill\qedsymbol\end{proof}

We can now state and prove our main theorem.
Recall that by Definition~\ref{flatdef} flat $\Lflat$-structures are axiomatized by $\Ax$, hence $\Mod(\phi^\flat\wedge\Ax)=\Flat(\phi)$ for any infinitary $L$-sentence $\phi$.

\begin{theorem}  \label{HjorthThm}  For a countable language $L$, the following are equivalent for a sentence $\phi\in L_{\omega_1,\omega}$.
\begin{enumerate}
\item  $\phi$ has a Borel complete expansion (i.e., there is some countable $L'\supseteq L$ such that $\Mod_{L'}(\phi)$ is Borel complete);
\item  $\phi^\flat\wedge \Ax$ has arbitrarily large models;
\item  $\phi^\flat\wedge\Ax$ admits Ehrenfeucht-Mostowski models;
\item  $S_\infty $ divides $\Aut(M)$ for some countable $M\models \phi$.
\end{enumerate}
\end{theorem}

\begin{proof}  The equivalence $(2)\Leftrightarrow(3)$ is by Fact~\ref{Morley} applied
to  the $L_{\omega_1,\omega}$-sentence $\phi^\flat\wedge \Ax$.  We prove the equivalence of $(1)$ and $(4)$ by independently showing that each is equivalent to (2).

$(1)\Rightarrow(2)$:  
Suppose 
that the class of $L'$-models of $\phi$ is Borel complete.  To ease notation, let $\psi$ denote the $(L')_{\omega_1,\omega}$-sentence $\phi$.
As the class of $L'$-structures modeling $\psi$
 is Borel complete, 
$||\psi||=|\psi_\ptl|=\infty$, i.e.,
there is a proper class of pairwise  back-and-forth inequivalent models of $\psi$.  
Thus, $\psi^\flat\wedge\Axprime$ has a proper class of non-isomorphic models by Corollary~\ref{relateflats}.
This implies that $\psi^\flat\wedge\Axprime$ has arbitrarily large models.  
Taking $L$-reducts, we conclude that $\phi^\flat\wedge\Ax$ has arbitrarily large models as well.

$(2)\Rightarrow(1)$:  Suppose $\phi^\flat\wedge\Ax$ has arbitrarily large models, hence admits EM models.  
Applying Fact~\ref{Morley}(3) to $\phi^\flat\wedge\Ax$, choose a 
 a countable model $\B^*\models \phi^\flat\wedge\Ax$ in an expanded language $L^*\supseteq \Lflat$, 
 the Skolem hull of   a sequence $\<\aa_q:q\in\Q\>$ of order indiscernibles.
Let $\B$ be the $\Lflat$-reduct of $\B^*$, which is a countable, flat structure.
Using Proposition~\ref{reverse}, let  $(M,\F)$ be a sharp structure whose flattening is $\B$.  So $M$ is a countable $L$-structure and $M\models\phi$ since $\B\models\phi^\flat$.  

For definiteness, suppose $\<\aa_q:q\in\Q\>$ are from $U_k(\B)$.
As in the proof of Lemma~\ref{QSsame},  partition $\Q=\sqcup\{D_n:n\in\omega\}$ into dense pieces and, for each $n$, let $X_n:=\bigcup\{cov^{-1}_k(\aa_q):q\in D_n\}$.
Let $X^*:=\bigcup\{X_n:n\in\omega\}\subseteq M^k$ and let $E^*$ be the equivalence relation on $X^*$ whose classes are $X_n$.
Let $L'=L\cup\{X^*,E^*,R\}$ and let $\psi$ be the same sentence $\phi$, but viewed as an $(L')_{\omega_1,\omega}$-sentence.
We can see that $\psi$ is Borel complete directly:  Given any directed graph $G=(\omega,r)$, let $M_G$ be the expansion  $(M,X^*,E^*,R)$ of $M$ with
$R(\abar,\bbar)$ holding if and only if for some $n,m\in\omega$, $\abar\in X_n$, $\bbar\in X_m$, and $G\models r(n,m)$.
It is evident that if $M_{G_1}\cong M_{G_2}$, then $G_1\cong G_2$ as graphs.  For the other direction, choose a graph isomorphism $h:G_1\rightarrow G_2$.
As each $D_n$ is a countable, dense subset of $(\Q,\le)$,
  a back-and-forth argument yields an automorphism $h^*\in\Aut(\Q,\le)$ encoding $h$, i.e., 
  $$\hbox{for all $n\in\omega$ and for all $q\in\Q$, $[q\in D_n \Longleftrightarrow h^*(q)\in D_{h(n)}]$}$$
As $\B$ is the reduct of a Skolem hull indexed by $(\Q,\le)$, there is an $\Lflat$-automorphism $\sigma\in\Aut(\B)$
satisfying $\sigma(\aa_q)=\aa_{h^*(q)}$ for every $q\in\Q$.  By Proposition~\ref{reverse} there is an $L$-automorphism $j:M\rightarrow M$ such that for all $\ell\in\omega$
and $\bbar\in M^\ell$, 
$cov_\ell(j(\bbar))=\sigma(cov_\ell(\bbar))$.  In particular, for each $q\in\Q$,
$j$ maps $cov_k^{-1}(\aa_q)$ onto $cov_k^{-1}(\aa_{h^*(q)})$ setwise.  
Thus, $j$ preserves $X^*$ setwise, and moreover maps each $X_n$ onto $X_{h(n)}$.  This means that $j$ also preserves $E^*$, and by our interpretation of
$R$ in both  $M_{G_1}$ and $M_{G_2}$, $j:M_{G_1}\rightarrow M_{G_2}$ is an $L'$-isomorphism.
%

$(4)\Rightarrow(2)$:  Suppose $M\models \phi$ is countable and $S_\infty$ divides $\Aut(M)$.  Choose an expansion $M^*$ of $M$ to an $L^*$-structure
for which there is
a continuous surjection $f:\Aut(M^*)\rightarrow Aut(\E)$ (recall that $\E=(\omega,=)$ and $S_\infty=\Aut(\omega,=)$).  
By Lemma~\ref{algebra}, there is an HC-reduction  $(\C_{\E},\sim_\conj)\le_{HC} (\C_{M^*},\sim_\conj)$.
Let $\phi_{M^*}$ be a Scott sentence for $M^*$.  Composing the above with the HC-reduction from
 Lemma~\ref{FSidea} and applying Lemma~\ref{bigcompare} to the right hand side yields an HC-reduction
 $$(\Mod_\omega(\mbox{Graphs}),\cong)\le_{HC} (\Mod_\omega(\phi_{M^*}^\flat\wedge\Axstar),\cong_{(L^*)^\flat})$$
 As $||(\mbox{Graphs},\cong)||=\infty$, it follows from Fact~\ref{bound} that there is a proper class of non-$(L^*)^\flat$-isomorphic models of $\phi_{M^*}^\flat\wedge\Axstar$.
Since $\phi_{M^*}^\flat\wedge\Axstar\vdash\phi_{M}^\flat\wedge\Ax$, the latter has arbitrarily large models.

%
%

$(2)\Rightarrow(4)$: $\phi^\flat\wedge \Ax$ has arbitrarily large models.
 By Fact~\ref{Morley}(3),  choose 
$L^*\supseteq \Lflat$ and a countable $L^*$-structure $\B^*\models\phi^\flat\wedge \Ax$
that is the Skolem hull of a sequence $\<\aa_q:q\in\Q\>$ of order indiscernibles and let $\B$ be the $\Lflat$-reduct of $\B^*$.
By Lemma~\ref{connectgroups}(1), $\Aut(\Q,\le)$ divides $\Aut(\B^*)$, hence also divides $\Aut(\B)$, since $\Aut(\B^*)\le \Aut(\B)$ is closed.
As $\B\models\phi^\flat$ is countable, choose $(M,\F)$ as in Proposition~\ref{reverse}.
 Then $M\models\phi$ and is countable.  By Lemma~\ref{connectgroups}(2) $\Aut(\B)$ divides $\Aut(M)$, hence  $\Aut(\Q,\le)$ divides $\Aut(M)$ by transitivity.
Thus $S_\infty$ divides $\Aut(M)$ by Lemma~\ref{QSsame}. 
\hfill\qedsymbol\end{proof}


\begin{Corollary}  \label{phiEM}  Suppose $\phi\in L_{\omega_1,\omega}$ admits Ehrenfeucht-Mostowski models.  Then $\phi$ has a Borel complete expansion.
\end{Corollary}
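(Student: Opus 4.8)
The plan is to reduce this to Theorem~\ref{HjorthThm}: I will check that the hypothesis forces condition~(4) of that theorem (namely, $S_\infty$ divides $Aut(M)$ for some countable $M\models\phi$), whence condition~(1), which is exactly the desired conclusion. So, assuming $\phi$ admits Ehrenfeucht-Mostowski models, I would first apply Fact~\ref{Morley}(3) to the sentence $\phi$ itself to obtain a countable language $L^*\supseteq L$, a countable Skolemized fragment $\F^*$ containing $\phi$, and a countable $L^*$-structure $N^*\models\phi$ that is the Skolem hull of a sequence $\<a_q:q\in\Q\>$ of $\F^*$-order indiscernibles.

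Next I would pass to the $L$-reduct $M:=N^*\myrestriction L$. Since $\phi$ is an $L$-sentence and $N^*\models\phi$, we get $M\models\phi$, and $M$ is countable. The point is that $Aut(N^*)$ is a \emph{closed} subgroup of $Aut(M)$: an automorphism of $N^*$ is certainly an automorphism of the reduct $M$, and additionally requiring it to preserve the countably many symbols of $L^*\setminus L$ cuts out a closed subset of $Aut(M)$. Then Lemma~\ref{connectgroups}(1) gives that $Aut(\Q,\le)$ divides $Aut(N^*)$ --- this is the step that uses $N^*=\dcl(\{a_q:q\in\Q\})$ together with order-indiscernibility. Since a closed subgroup divides the ambient group and dividing is transitive, $Aut(\Q,\le)$ divides $Aut(M)$, and hence $S_\infty$ divides $Aut(M)$ by Lemma~\ref{QSsame}. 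This is condition~(4) of Theorem~\ref{HjorthThm} for $\phi$, so the implication $(4)\Rightarrow(1)$ of that theorem finishes the proof.

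I do not expect a real obstacle here, since the substance is already carried by Theorem~\ref{HjorthThm} and Lemmas~\ref{connectgroups} and~\ref{QSsame}; the one place to be slightly careful is that taking the reduct yields only the containment $Aut(N^*)\le Aut(M)$, not an equality, so one must route through transitivity of dividing rather than reading condition~(4) off on the nose. (It is also worth remarking that the converse fails --- the cross-cutting theories $T_h$ with $h$ unbounded are Borel complete but have no model of cardinality $>2^{\aleph_0}$, hence admit no Ehrenfeucht-Mostowski models --- so the corollary is genuinely one-directional.)
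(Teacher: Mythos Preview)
Your proof is correct and follows essentially the same route as the paper's: apply Fact~\ref{Morley}(3) to $\phi$ itself, take the $L$-reduct $M$ of the Skolem hull $N^*$, use Lemma~\ref{connectgroups}(1) and the fact that $Aut(N^*)$ sits as a closed subgroup of $Aut(M)$ (together with transitivity of dividing) to get $Aut(\Q,\le)$ divides $Aut(M)$, then invoke Lemma~\ref{QSsame} and Theorem~\ref{HjorthThm}(4)$\Rightarrow$(1). Your care about the reduct-versus-equality point is exactly right and matches what the paper does implicitly.

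One correction to your final parenthetical, though: the first-order theories $T_h$ are \emph{not} counterexamples to the converse. Being first-order with infinite models, each $T_h$ has models of every infinite cardinality and certainly admits Ehrenfeucht--Mostowski models (this is precisely why the paper, in the proof of Theorem~\ref{bounded}, must pass from $T_h$ to the $L_{\omega_1,\omega}$-sentence $\phi_h$ describing its atomic models before invoking Theorem~\ref{HjorthThm}). If you want genuine witnesses that the corollary is one-directional, look instead at the receptive sentences discussed just after the corollary in the paper, e.g.\ Scott sentences characterizing $\aleph_\alpha$, or $\phi_h$ itself when $h$ is bounded.
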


\begin{proof}  Apply Fact~\ref{Morley} directly to $\phi$ and get $L^*\supseteq L$ countable and a countable $L^*$-structure $M^*\models\phi$ that is the Skolem hull
of an order-indiscernible sequence $\<a_q:q\in\Q\>$.   By Lemma~\ref{connectgroups}(1), $\Aut(\Q,\le)$ divides $\Aut(M^*)$, hence also divides $\Aut(M)$,
where 
$M$ is the $L$-reduct of $M^*$. 
Thus, by Lemma~\ref{QSsame} $S_\infty$ divides $\Aut(M)$, so Theorem~\ref{HjorthThm} applies.
\hfill\qedsymbol\end{proof}

The converse of  Corollary~\ref{phiEM} does not hold.
It follows from Theorem~1.10 of \cite{herrings} that if $\phi_M$ is the Scott sentence of the Fra\"iss\'e limit  $M$ of a class $K$ of finite structures satisfying disjoint amalgamation,
then $S_\infty$ divides $\Aut(M)$.  
Thus, any of the examples of Scott sentences $\phi_n$ characterizing $\aleph_n$ 
in  \cite{BKL} do not admit Ehrenfeucht-Mostowski models, but have Borel complete expansions.  
Similarly,  in  \cite{HjorthKnight} Hjorth shows that for every $\alpha<\omega_1$, there is a Scott sentence
$\psi_\alpha$ characterizing $\aleph_\alpha$ with $S_\infty$ dividing the automorphism group of its countable model.   So again, each $\psi_\alpha$ does not admit Ehrenfeucht-Mostowski
models, yet has  a Borel complete expansion.  

By contrast, the much older example of  Knight in \cite{Knight}  of a complete $L_{\omega_1,\omega}$-sentence $\phi_{\Kn}$ characterizing $\aleph_1$  is  not obtained as a Fra\"iss\'e limit.
In \cite{HjorthKnight}, Hjorth proves that $S_\infty$ does not divide $\Aut(M)$, the automorphism group of the (unique) countable model of $\phi_{\Kn}$. 
%
Thus, $\phi_{\Kn}$ does not have a  Borel complete expansion by Theorem~\ref{HjorthThm}.

\section{Applications} We record two   instances where the conditions of Theorem~\ref{HjorthThm} hold.

\begin{Corollary}  \label{example}  Let $L$ be a countable language.
\begin{enumerate}
\item  If $T$ is a first-order $L$-theory with an infinite model, then  $T$ has a Borel complete expansion.
\item  If $\phi\in L_{\omega_1,\omega}$ and $||\phi_\ptl||\ge\beth_{\omega_1}$, then $\phi$ has a Borel complete expansion.
\end{enumerate}
\end{Corollary}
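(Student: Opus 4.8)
The plan is to deduce both clauses from Theorem~\ref{HjorthThm} together with its consequence Corollary~\ref{phiEM}, by producing enough models of the relevant (flat) theory in each case.

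For Clause~(1): since $L$ is countable I would identify the first-order theory $T$ with the sentence $\phi_T:=\bigwedge T\in L_{\omega_1,\omega}$, which has exactly the same models as $T$. Because $T$ is first order with an infinite model, compactness and upward L\"owenheim--Skolem show that $T$, and hence $\phi_T$, has arbitrarily large models, so $\phi_T$ admits Ehrenfeucht--Mostowski models in the sense of Fact~\ref{Morley} (via condition~(2)). Corollary~\ref{phiEM} then yields a Borel complete expansion $\psi\in(L')_{\omega_1,\omega}$ of $\phi_T$, and since $\psi\vdash\phi_T$ is the same as $\psi\vdash\sigma$ for every $\sigma\in T$, this $\psi$ is precisely a Borel complete expansion of $T$. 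The one point to watch here is that we should invoke Corollary~\ref{phiEM}, whose hypothesis is about $\phi_T$ itself, rather than condition~(2) of Theorem~\ref{HjorthThm}, which concerns the flat theory $\phi_T^\flat\wedge\Ax$ and not $\phi_T$.

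For Clause~(2): by the equivalence $(1)\Leftrightarrow(2)$ of Theorem~\ref{HjorthThm} it suffices to show that $\phi^\flat\wedge\Ax$ has arbitrarily large models. First I would observe that $\phi^\flat\wedge\Ax$ has at least $\beth_{\omega_1}$ pairwise non-isomorphic models: by Theorem~\ref{restrict} the isomorphism types of the Hausdorff flat models of $\phi^\flat$ (i.e.\ $(\Hdf(\phi),\cong)$) biject with $CSS(\phi)_\ptl$, so the hypothesis $||\phi_\ptl||=|CSS(\phi)_\ptl|\ge\beth_{\omega_1}$ furnishes $\ge\beth_{\omega_1}$ non-isomorphic Hausdorff flat structures, each of which (being flat) is in particular a model of $\phi^\flat\wedge\Ax$. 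Then, assuming toward a contradiction that $\phi^\flat\wedge\Ax$ did not have arbitrarily large models, Fact~\ref{Morley} (the contrapositive of $(1)\Rightarrow(2)$, applied to the $(\Lflat)_{\omega_1,\omega}$-sentence $\phi^\flat\wedge\Ax$) gives some $\alpha<\omega_1$ such that every model of $\phi^\flat\wedge\Ax$ has cardinality $<\beth_\alpha$; but $\Lflat$ is countable, so there are at most $2^{\beth_\alpha}=\beth_{\alpha+1}$ isomorphism types of $\Lflat$-structures of cardinality $\le\beth_\alpha$, and $\beth_{\alpha+1}<\beth_{\omega_1}$ since $\alpha+1<\omega_1$ --- contradicting the previous sentence. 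Hence $\phi^\flat\wedge\Ax$ has arbitrarily large models, and Theorem~\ref{HjorthThm} delivers a Borel complete expansion of $\phi$. (When $||\phi_\ptl||=\infty$, i.e.\ $CSS(\phi)_\ptl$ is a proper class, the same argument shows $\phi^\flat\wedge\Ax$ has a proper class of non-isomorphic models and so has no cardinal bound on the sizes of its models.)

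I do not expect a genuine obstacle: both clauses are short once the earlier sections are in place. The only step requiring any care is the cardinal-arithmetic move in Clause~(2) --- converting ``at least $\beth_{\omega_1}$ non-isomorphic models'' into ``arbitrarily large models'' through Morley's theorem --- which works exactly because $2^{\beth_\alpha}=\beth_{\alpha+1}<\beth_{\omega_1}$ for every countable ordinal $\alpha$. Everything else is bookkeeping with the correspondences between models of $\phi$, Hausdorff flat structures, and $CSS(\phi)_\ptl$ already established.
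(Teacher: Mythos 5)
Your proposal is correct and follows essentially the same route as the paper: clause (1) reduces to Corollary~\ref{phiEM} applied to $\bigwedge T$ (the paper cites Ramsey and compactness for the EM models where you route through upward L\"owenheim--Skolem and Fact~\ref{Morley}(2), a negligible difference), and clause (2) uses Theorem~\ref{restrict} to convert the hypothesis into $\ge\beth_{\omega_1}$ non-isomorphic models of $\phi^\flat\wedge\Ax$ and then applies Fact~\ref{Morley} to get arbitrarily large models. Your explicit cardinal count ($2^{\beth_\alpha}=\beth_{\alpha+1}<\beth_{\omega_1}$) just spells out the step the paper leaves implicit.
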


\begin{proof}  (1) By Ramsey and compactness, $T$ admits Ehrenfeucht-Mostowski models, so apply Corollary~\ref{phiEM} to $\phi:=\bigwedge T$.

For (2), it follows from $||\phi_\ptl||\ge\beth_{\omega_1}$ that $\phi^\flat\wedge\Ax$ has at least $\beth_{\omega_1}$ non-isomorphic models.
As $L^\flat$ is countable, for every $\alpha<\omega_1$, there is a model of  $\phi^\flat\wedge\Ax$ of size at least $\beth_\alpha$.  So, by applying
Fact~\ref{Morley} to $\phi^\flat\wedge\Ax$, there are arbitrarily large models of $\phi^\flat\wedge\Ax$.  Thus, Theorem~\ref{HjorthThm} applies.
\hfill\qedsymbol\end{proof}

%

More interesting are cases where the conditions of Theorem~\ref{HjorthThm} do not hold.  
In \cite{LU}, the authors considered families of cross-cutting equivalence relations, each with finitely many classes.  
More formally, fix the language $L=\{E_n:n\in\omega\}$.  
For any function $h:\omega\rightarrow(\omega-\{0,1\})$,  let $T_h^{\forall}$ be the universal $L$-theory asserting
that each $E_n$ is an equivalence relation with at most $h(n)$ classes.
It is easily checked that $T_h^\forall$ has a model completion $T_h$, which asserts that each $E_n$ has exactly $h(n)$ classes,
and the relations cross-cut, i.e., for any finite $F\subseteq \omega$, if we let $E_F(x,y):=\bigwedge_{n\in F} E_n(x,y)$, then $E_F$ has exactly
$\Pi_{n\in F} h(n)$ classes.  

In Remark~2.7 of \cite{LU}, the authors prove that if the set of integers $\{h(n):n\in\omega\}$ is unbounded, then $T_h$ is Borel complete.
Here, we use Theorem~\ref{HjorthThm} to obtain the converse.

\begin{theorem}  \label{bounded}  If the set of integers $\{h(n):n\in\omega\}$ is  bounded, then $T_h$ is not Borel complete.
\end{theorem}

\begin{proof}  
As $T_h$ is first-order, Corollary~\ref{example} shows we cannot apply Theorem~\ref{HjorthThm}
to  $T_h$ directly.  Rather, consider an arbitrary countable model $M\models T_h$.  
Consider the infinitary formula $E_\infty(x,y):=\bigwedge_{n\in\omega} E_n(x,y)$.  Then $E_\infty$ is an equivalence relation. 
For each $a\in M$,
$[a]_\infty:=\{b\in M: E_{\infty}(a,b)\}$, which ostensibly can have any cardinality $m$, $1\le m\le \omega$.
Let $\phi_h\in L_{\omega_1,\omega}$ assert:
$$\bigwedge T_h\wedge \forall x\forall y (E_\infty(x,y)\rightarrow x=y)$$
It is easily checked that any model $M\models\phi_h$ has cardinality at most $2^{\aleph_0}$.
Moreover, its automorphism group $\Aut(M)$ is easy to analyze.

\medskip\noindent{\bf Claim.}  For
$M\models\phi_h$, $\Aut(M)$  isomorphically embeds into  $\Pi_{n\in\omega} \Sym(h(n))$, where $\Sym(h(n))$ is the  group of permutations
of $\{0,\dots,h(n)-1\}$.
Since $\{h_n\}$ is  bounded, $\Aut(M)$
has bounded exponent.

\begin{proof}  
 As no two elements of $M$ realize the same $E_\infty$-class,  every automorphism of $M$ is determined by
how it permutes the classes.  As the equivalence relations $E_n$ have $h(n)$ classes,
 $\Aut(M)$ is isomorphic to a subgroup of  $\Pi_{n\in \omega} \Sym(h(n))$.  
Since $\{h(n)\}$ is bounded, choose  an integer $K$ so that $h(n)\le K$ for all $n\in\omega$. 
 As each of the finite groups
$\Sym(h(n))$ has exponent dividing $K!$, so does $\Aut(M)$.
\hfill\qedsymbol\end{proof}

It follows from the Claim that $S_\infty$ cannot divide the automorphism group of any countable model $M\models\phi_h$, so   
we conclude from Theorem~\ref{HjorthThm} that no expansion of $\phi_h$ can be Borel complete.

To finish the proof of Theorem~\ref{bounded}, we note there is a natural Borel reduction from $\Mod_\omega(T_h)$ to $\Mod_\omega(\psi_h)$ for some expansion $\psi_h\vdash\phi_h$.
Given a countable $M\models T_h$, we simply encode the size of each $E_\infty$-class by unary predicates.
In more detail, let  $L^*=L\cup\{U_m:1\le m\le \omega\}$, where each $U_m$ is a unary predicate, and let
$\psi_h$ be the $(L^*)_{\omega_1,\omega}$-sentence asserting $\phi_h$ and that the predicates $\{U_m:1\le m\le\omega\}$ partition the universe.
As it is an expansion of $\phi_h$, $\psi_h$ is not Borel complete.

But easily, $T_h$ and $\psi_h$ are Borel equivalent, since given any countable $M\models T_h$, let $f(M)$ be the $L^*$-structure
with  universe $M/E_\infty$, where the equivalence relations 
$E_n$ are interpreted naturally, and where $U_m([a]_{E_{\infty}})$ holds if and only if $|[a]_{E_{\infty}}|=m$ (i.e., we `color the branches by the number of realizations of the branch').
Since $\psi_h$ is not Borel complete, neither is $T_h$.
\hfill\qedsymbol\end{proof}

\subsection*{Acknowledgements}
Both authors partially supported
by NSF grants DMS-1855789 and DMS-2154101.
We are grateful to the anonymous referee for their extremely careful reading of  preliminary versions of this paper and for indicating several inaccuracies.

\normalsize

\end{document}